\newtheorem{theorem}{Theorem}[section]
\newtheorem{definition}[theorem]{Definition}
\newtheorem{lemma}[theorem]{Lemma}
\theoremstyle{remark}
\newtheorem{remark}[theorem]{Remark}
\DeclareMathOperator{\dive}{div}
\numberwithin{equation}{section}
\title[High friction limit for Euler--Navier--Stokes--Korteweg]{High friction limit  for Euler--Korteweg and Navier--Stokes--Korteweg models via relative entropy approach} 
\author[G. Cianfarani Carnevale]{Giada Cianfarani Carnevale}
\address[Giada Cianfarani Carnevale]{Dipartimento di Ingegneria e Scienze dell'Informazione e Matematica, Universit\`a degli Studi dell'Aquila (Italy)}
\email{giada.cianfaranicarnevale@graduate.univaq.it}
\author[C. Lattanzio]{Corrado Lattanzio}
\address[Corrado Lattanzio]{Dipartimento di Ingegneria e Scienze dell'Informazione e Matematica, Universit\`a degli Studi dell'Aquila (Italy)}
\email{corrado@univaq.it}
\keywords{Euler--Navier--Stokes--Korteweg models, diffusive relaxation, relative entropy}
\subjclass[2010]{}
\begin{document}

\begin{abstract}
The aim of this paper is to investigate the singular relaxation limits for the Euler--Korteweg and the Navier--Stokes--Korteweg system in the high friction regime. We shall prove that the viscosity term is present only in higher orders in the proposed scaling and therefore it does not affect the limiting dynamics, and the two models share the same equilibrium equation. The analysis of the limit is carried out using the relative entropy techniques in the framework of weak, finite energy solutions of the relaxation models converging toward smooth solutions of the equilibrium. The results proved here take advantage of the enlarged formulation of the models in terms of the \emph{drift velocity}  introduced in \cite{Bresh},  generalizing in this way the ones proved in \cite{LT2} for the Euler--Korteweg model.
\end{abstract}


\maketitle




%

%
\section{Introduction}
\label{sec:intro}
The objective of this work is to study the high friction limit for  the Euler--Korteweg and the Navier--Stokes--Korteweg systems, that is:
\begin{equation}\label{ek}
	\left\{\begin{aligned}
		& \partial_t \rho + \dive m =0\\
		&  \partial_t m + \dive \left( \frac{m \otimes m}{\rho}\right) + \nabla p(\rho)  =
		 {2 \nu} \dive(\mu_L(\rho)Du) + \nu \nabla(\lambda_L(\rho) \dive u)
		\\
	& \qquad\qquad\qquad	+ \rho \nabla \left( k(\rho)\Delta\rho + \frac{1}{2}k'(\rho)|\nabla \rho|^2 \right) - \xi \rho u,
		\end{aligned}\right.
\end{equation}
where $t>0$, $x\in\mathbb{T}^n$, the $n$--dimensional torus, $\rho$ is the density, $m=\rho u $ is the momentum,  and the constants $\xi > 0$ and $\nu\geq 0$ stand for the large friction and the viscosity coefficient ($\nu=0$ for the  case of Euler--Korteweg system). 
As usual, in the viscosity terms of \eqref{ek}
$$Du= \frac{ \nabla u + {}^t\nabla u}{2}$$ 
is the symmetric part of the gradient $\nabla u$ and the Lam\'e coefficients $\mu_L(\rho)$ and $\lambda_L(\rho)$ verifies
\begin{equation}
    \label{lame}
    \mu_L(\rho)\geq 0;\ \frac2n \mu_L(\rho) +\lambda_L(\rho) \geq 0.
\end{equation}
Moreover, $p(\rho)$ stands for the pressure, connected to  the internal energy $e(\rho)$ by the relations 
\begin{equation}\label{eq:defh}
e'(\rho)= \frac{p(\rho)}{\rho^2};\ h(\rho)= \rho e(\rho);\ h''(\rho) = \frac{p'(\rho)}{\rho};\ p(\rho)=\rho h'(\rho) - h(\rho).
\end{equation}
As a consequence, we readily obtain 
\begin{equation*}
\rho \nabla (h'(\rho)) = \rho h''(\rho) \nabla \rho = \nabla p(\rho).
\end{equation*}
In what follows, and we shall confine ourselves to the case of monotone pressure, and, for simplicity we shall consider the classical $\gamma$--law 
 $p(\rho)= \rho^{\gamma}$ for  $\gamma > 1$, for which the function $h$ is given by
 \begin{equation*}
     h(\rho) = \frac{1}{\gamma-1}\rho^\gamma.
 \end{equation*}
 
The literature concerning these kind of systems, which include in particular Quantum Hydrodynamic models, is very wide and a complete description of it is beyond the main interest of our present research, which is focused in the study of the relaxation limit for weak, finite energy solutions of \eqref{ek}. In particular, we are not interested here in investigating the existence of such solutions, but solely in understanding their behavior in the high friction regime. 
 However, for some rigorous 
  mathematical studies  of such systems, regarding in particular the existence of weak solutions,  the dedicated reader may refer to \cite{AM1,AM2,AS,AS2} and the reference therein.

The high friction regime, after an appropriate time scaling, in both cases  is given by the following equation: 
\begin{equation}\label{eq:diff-limit}
\rho_t = \dive_x \left( \rho \nabla_x \left(h'(\rho) +  k(\rho)\Delta\rho + \frac{1}{2}k'(\rho)|\nabla \rho|^2 \right) \right),
\end{equation} 
as one can  easily check by performing the classical Hilbert expansion. Moreover, the rigorous study of this singular limit in terms of relative entropy techniques limit when $\nu>0$ does not present significant differences, and therefore
 we shall first discuss the case of Euler--Korteweg system in full details, and leave the discussion of the Navier--Stokes--Korteweg for the last section, where we shall emphasize only how to control the new terms due to the presence of the viscosity in \eqref{ek}. 
 Moreover, it is worth to observe here that, besides the natural condition \eqref{lame} needed to guarantee the dissipative nature of the viscosity terms, we shall assume here only appropriate uniform integrability conditions on that functions (which can be deduced from   a bound of their $L^1$ norm in terms of the energy), without a precise connection with the 
 capillarity coefficient $k(\rho)$, as  it is usually needed in the analysis of these models.
 
The kind of singular limits under investigation here enters in the realm of diffusive relaxations, for which hyperbolic systems of balance laws (as \eqref{ek} for $\nu=0$) converge in a diffusive scaling toward parabolic equilibrium systems. These kind of asymptotic analysis  has been addressed in various frameworks and with several techniques; in particular we refer to \cite{DM04} and the reference therein for the results concerning weak solutions and compactness arguments.
More recently, this kind of limits has been also successfully addressed by means of relative entropy techniques, starting from the well--known  case of 
 the Euler system with friction (obtained by choosing $k(\rho) =0$ in \eqref{ek} in addition to $\nu=0$) converging  to the porous media equation \cite{LT}. 
 It is worth recalling that,  as already pointed out before, this asymptotic behavior has been analyzed also before under many different viewpoints, and in particular for this remarkable example we refer to \cite{MM90,HMP05,HPW11}. However, the study of such limits with the present technique, even if it   is confined  to the case of smooth solutions  at equilibrium, has the advantage of obtaining a stability estimate and hence a rate of convergence as the relaxation parameter goes to zero.
 
 More recently, many other diffusive limits have been addressed following the same ideas; among others, see \cite{Bia19,FT19,HJT19,OR20,Carrillo}, and in particular here we recall the general framework introduced in \cite{GLT,LT2}, where the relative entropy calculation and the analysis of the diffusive limits have been presented in the general framework of abstract Euler flows generated by the first variation of an energy functional $\mathcal{E}(\rho)$:
\begin{equation*}
\left\{\begin{aligned}
		& \partial_t \rho + \dive (\rho u) =0\\
		&  \rho \partial_tu  + \rho u \cdot \nabla u =  - \rho \nabla \frac{\delta \mathcal{E}}{\delta \rho} - \xi \rho u.
		\end{aligned}\right.
\end{equation*}

The system $\eqref{ek}$ under consideration here belongs to this class of abstract flows for the following particular choice for $\mathcal{E}(\rho)$:
\begin{equation}\label{eq:potintro}
\mathcal{E}(\rho) =  \int \left (h(\rho) + \frac12 k(\rho) |\nabla \rho|^2 \right )dx.
\end{equation}
Referring in particular to the analysis of the large  limit, among other possible instances,  we recall  here that  in the paper  \cite{LT2} the Authors showed
 the emergence of the (Cahn--Hilliard type) equation \eqref{eq:diff-limit} as high friction limit of the Euler-Korteweg system solely in the case of constant capillarity $k(\rho) = C_k$. This result is based on the following general relative entropy relation for the aforementioned abstract Euler equations \cite{GLT,LT2} 
 \begin{equation*}
\begin{split}
& \frac{d}{dt} \left( \mathcal{E}(\rho|\bar{\rho}) + \int \frac{1}{2} \rho|u-\bar{u}|^2 \right) + \xi \int \rho|u-\bar{u}|^2 dx = \\ & \int \nabla \bar{u} : S(\rho|\bar{\rho}) dx - \int \rho \nabla \bar{u} : (u-\bar{u}) \otimes (u-\bar{u}) dx,
\end{split}
\end{equation*}
written here for  $(\rho,u)$ and $(\bar{\rho},\bar{u})$  smooth solutions of this system.  
The stress tensor $S$ appearing in the relation above can be defined in many examples of physical interest starting from the energy functional as follows:
\begin{equation*}
-\rho \nabla \frac{\delta \mathcal{E}}{\delta \rho} = \dive S.
\end{equation*}
In the particular case under consideration here, this relation becomes
\begin{equation*}
-\nabla p (\rho) + \rho \nabla \left( k(\rho)\Delta\rho + \frac{1}{2}k'(\rho)|\nabla \rho|^2 \right) = \dive S.
\end{equation*}
%
%
%

The relation recalled above, and thus the corresponding control of the diffusive limit can be improved if we confine our attention to the specific form   of the  Euler-Korteweg systems 
\eqref{ek}, as it has been recently proved in \cite{Bresh}. Indeed, the relative entropy techniques turns out to be  more effective if one introduce the
\emph{drift velocity} 
\begin{equation*}
v= \frac{\nabla \mu(\rho)}{\rho},
\end{equation*}
where $\mu(\rho)$ satisfies $\mu'(\rho) = \sqrt{\rho k(\rho)}$. In this way, it is possible to obtain 
 an augmented formulation of \eqref{ek}, which, for the Euler-Korteweg system (that is, with $\nu=0$), reads  as follows:
\begin{equation}\label{ekb}
	\left\{\begin{aligned}
		& \partial_t \rho + \dive (\rho u) =0\\
		&  \partial_t( \rho u ) + \dive( \rho u\otimes u) + \nabla p( \rho)   = \dive(\mu(\rho)\nabla v) + \frac{1}{2}\nabla(\lambda(\rho)\dive v) - \xi \rho u\\
		& \partial_t (\rho v) + \dive(\rho v \otimes u) + \dive(\mu(\rho)^t\nabla u) + \frac{1}{2}\nabla (\lambda(\rho)\dive u)=0,
		\end{aligned}\right.
\end{equation}
 where $\lambda(\rho)= 2(\mu'(\rho)\rho - \mu(\rho))$  and,  thanks to the Bohm identity (see \cite{Bresh}),
 we also have
 \begin{equation*}
 \dive(\mu(\rho)\nabla v) + \frac{1}{2}\nabla(\lambda(\rho)\dive v) = \dive S_1
 \end{equation*}
 thus defining the new stress tensor  $S_1$ in $\eqref{ekb}$ due solely to the capillarity effects.
As we shall prove in the sequel, this approach will lead us to control the high friction limits for non constant capillarities,  obtaining the same advantages already pointed out in  \cite{Bresh} also in the context of diffusive relaxation, thus generalizing the results of \cite{LT2} for this particular system.
More precisely, the strategy is to  define a new \emph{momentum} $J=\rho v$ to then estimate the following relative entropy:
\begin{align*}
\eta(\rho,m,J | \bar{\rho},\bar{m},\bar{J}) & = \eta(\rho,m,J) - \eta(\bar{\rho},\bar{m},\bar{J}) - \bar{\eta}_{\rho}(\rho- \bar{\rho})- \bar{\eta}_m \cdot (m-\bar{m}) \\
& \  - \bar{\eta}_J \cdot(J-\bar{J}) \\
& = \frac{1}{2} \rho |u - \bar{u}|^2 + \frac{1}{2} \rho |v-\bar{v}|^2 + h(\rho|\bar{\rho}).
\end{align*}
In the present analysis, which involves a relaxation limit between two different diffusive theories,   the equilibrium (smooth) solution $(\bar{\rho},\bar{m},\bar{J})$ will solve the corresponding diffusive limiting equation, which shall then be recasted as an appropriate correction of the relaxing system \eqref{ekb}, as already done in previous works \cite{LT,LT2}.
%

The outline of this work is as follows. In Section \ref{sec:hilbert}, after the appropriate time scaling,  we perform the Hilbert expansion of \eqref{ekb} in order to recognize the limit equation. Then  we rewrite the latter as a correction of the relaxation system \eqref{ekb} to take full advantage of the relative entropy tools. Section \ref{sec:relenes} is devoted to obtaining the relative entropy inequality, which will be used as an yardstick to measure the distance between the two solutions in the relaxation limit of the subsequent section. Finally, in Section \ref{sec:NSK} we describe our all results can be adapted in a straightforward way to the case of the Navier--Stokes--Korteweg model \eqref{ek} for $\nu>0$.
\section{Hilbert expansion and formal diffusive limit for the Euler--Korteweg model}
\label{sec:hilbert}
In this section we shall present the correct  scaling for  which \eqref{ek}, and hence \eqref{ekb},  exhibits the desired  diffuse limit. More precisely, for $\xi = 1/\epsilon$, 
we rescale the time so that  $\partial_t \rightarrow \epsilon\partial_t$ and \eqref{ek} becomes:
\begin{equation}\label{ek-scaled}
	\left\{\begin{aligned}
		&  \partial_t \rho + \frac{1}{\epsilon} \dive m =0 \\
		&  \partial_tm + \frac{1}{\epsilon} \dive \left( \frac{m \otimes m}{\rho}\right) + \frac{1}{\epsilon} \nabla p( \rho)   
		= \frac{1}{\epsilon}\dive S_1 - \frac{1}{\epsilon^2} \rho u. 
	\end{aligned}\right.
\end{equation}
Accordingly, \eqref{ekb} reads
\begin{equation}\label{ekb-scaled}
	\left\{\begin{aligned}
		& \partial_t \rho + \frac{1}{\epsilon} \dive (m) =0\\
		&  \partial_t( m) + \frac{1}{\epsilon} \dive \left( \frac{m \otimes m}{\rho}\right) + \frac{1}{\epsilon} \nabla p( \rho)   = \frac{1}{\epsilon}\dive S_1 - \frac{1}{\epsilon^2} \rho u\\
		& \partial_t (J) + \frac{1}{\epsilon} \dive \left( \frac{  J \otimes m}{\rho}   \right) + \dive S_2=0,
		\end{aligned}\right.
\end{equation}
where $J=\rho v$ and   (see \cite{Bresh} for further details)
\begin{equation*}
\dive  S_2 = \dive(\mu(\rho)^t\nabla u) + \frac{1}{2}\nabla (\lambda(\rho)\dive u).
\end{equation*}
 In order to perform the Hilbert expansion, we need to 
 introduce the asymptotic expansions of $\rho$ and $m$ in \eqref{ekb-scaled}, and the one for $J$ will follow, being $J = \rho v = \nabla \mu(\rho)$.
 To this end,
\begin{align*}
 &\rho = \rho_0 + \epsilon\rho_1 + \epsilon^2 \rho_2 + \cdots\\
  &m = m_{0} + \epsilon m_{1} + \epsilon^2 m_{2} + \cdots
  \end{align*}
 and collect the terms of the same order. From the mass conservation we get:
\begin{align*}
&O(\epsilon^{-1}): &  & \dive m_{0} = 0; \\ 
&O(1): &  &\partial_t \rho_0 + \dive m_{1} = 0; \\
&O(\epsilon): 
&  & \dots \\
\end{align*}
from the momentum equation we get:
\begin{align*}
& O(\epsilon^{-2}) :  &   &m_{0}=0; \\ 
& O(\epsilon^{-1}) :  &    &-m_{1} = \nabla p(\rho_0) - \dive S_1(\rho_0);& \\
 & O(1):  &    &\dots  \\
\end{align*}
Hence, from these first relations,  we recover the equilibrium relation $m_{0} = 0$, the Darcy's law $m_{1} = - \nabla_x p(\rho_0) + \dive_x S_1(\rho_0)$, and 
  the following gradient flow dynamic for $\rho_0$:
\begin{equation}\label{gf}
\partial_t\rho_{0} + \dive \left( - \nabla p(\rho_0) + \dive S_1(\rho_0) \right)= 0,
\end{equation}
that is, the formal limit as $\epsilon \rightarrow 0$ of \eqref{ekb-scaled}.

In order to compare weak solutions of  \eqref{ekb-scaled} and strong solutions of its parabolic equilibrium \eqref{gf} and take full advantage of the relative entropy estimate for hyperbolic systems, as  already done in \cite{LT, LT2}, we the latter  as Euler--Korteweg system with friction plus  an error term as follows.
Let us denote by $\bar{\rho}$ the (smooth)  solution of $\eqref{gf}$. Then $(\bar{\rho}, \bar{m} = \bar{\rho}\bar{u})$ solves
\begin{equation}\label{ek-scaledstrong}
	\left\{\begin{aligned}
		& \partial_t \bar{\rho} + \frac{1}{\epsilon} \dive \bar{m} =0\\
		&  \partial_t\bar{ m} + \frac{1}{\epsilon} \dive \left( \frac{\bar{m} \otimes \bar{m}}{\bar{\rho}}\right) + \frac{1}{\epsilon} \nabla p(\bar{ \rho})   = \frac{1}{\epsilon}\dive \bar{S_1} - \frac{1}{\epsilon^2} \bar{m} + e(\bar{\rho},\bar{m}),
		\end{aligned}\right.
\end{equation}
where
\begin{equation*}
\bar{m} =  \epsilon \left(- \nabla p(\bar{\rho})+   \dive S_1(\bar{\rho})\right).
\end{equation*}
Clearly, in \eqref{ek-scaledstrong}, the error term  $e(\bar{\rho},\bar{m})= \bar{e}$ is given by:
\begin{align}\label{eq:error}
 \bar{e} & = \frac{1}{\epsilon} \dive_x \left( \frac{\bar{m} \otimes \bar{m} }{\bar{\rho}} \right) + \bar{m}_t \nonumber\\ 
 & = \epsilon \dive_x \left( (- \nabla p(\bar{\rho})+   \dive S_1(\bar{\rho})) \otimes  (- \nabla p(\bar{\rho})+   \dive S_1(\bar{\rho}) \right) 
 \nonumber\\ 
 & \ +  \epsilon \left( - \nabla p(\bar{\rho})+   \dive S_1(\bar{\rho}) \right)_t 
 \nonumber\\ 
 & = O(\epsilon).
\end{align}
Introducing the notation $\bar{J}= \bar{\rho}\bar{v} = \nabla \mu(\bar{\rho})$, the equilibrium can be rewritten also as follows:
\begin{equation}\label{ekb-scaledstrong}
	\left\{\begin{aligned}
		& \partial_t \bar{\rho} + \frac{1}{\epsilon} \dive \bar{m} =0\\
		&  \partial_t\bar{ m} + \frac{1}{\epsilon} \dive \left( \frac{\bar{m} \otimes \bar{m}}{\bar{\rho}}\right) + \frac{1}{\epsilon} \nabla p(\bar{ \rho})   = \frac{1}{\epsilon}\dive \bar{S_1} - \frac{1}{\epsilon^2} \bar{m} + e(\bar{\rho},\bar{m})\\
		& \partial_t \bar{J} + \frac{1}{\epsilon} \dive \left( \frac{ \bar{J} \otimes \bar{m}}{\bar{\rho}}   \right) + \frac{1}{\epsilon} \dive \bar{S_2}=0.
		\end{aligned}\right.
\end{equation}
As already done previously \cite{LT,LT2}, in next section we as shall  validate  rigorously the large friction limit  using relative entropy estimates, but this time using the enlarged reformulation in terms of the drift velocity, thus considering the singular limit from \eqref{ekb-scaled} to \eqref{ekb-scaledstrong}.
\section{Relative entropy estimate for the Euler--Korteweg model}
\label{sec:relenes}
Let us start by  we start by recalling the  entropy--entropy flux pair $(\eta, Q)$ associated to the original Euler-Korteweg system \eqref{ek} with $\xi = 1/\epsilon$ and after the related time scaling.
Using the notation of \cite{GLT,LT2}, we obtain the potential energy (see \eqref{eq:potintro})
$$F(\rho, \nabla \rho)= h(\rho) + \frac{1}{2} k(\rho)|\nabla \rho|^2,$$ 
while  the kinetic energy reads
$$E_K = \frac{1}{2} \rho |u|^2.$$ 
Moreover, the couple $(\eta, Q)$ is defined in the following way:
\begin{align*}
\eta(\rho,m, \nabla \rho) &= \frac{1}{2} \rho|u|^2+ \frac{1}{2} k(\rho)|\nabla \rho|^2 + h(\rho);\\
Q(\rho,m, \nabla \rho) &= \frac{1}{2}\rho u |u|^2 + \rho u  \left( h'(\rho) + \frac{1}{2}k'(\rho)|\nabla \rho|^2 -\dive (k(\rho)\nabla \rho)\right) \\
&\ +  k(\rho)\nabla \rho\dive (\rho u).
\end{align*}
Before the rigorous justification of the relative entropy calculation in the context of weak solutions we are interested in, let us first briefly present the   (formal) computation leading to the desired expression in the case when both  solutions (of the relaxation and the limiting equations) are regular. 
Let us emphasize  once again that in the sequel  we shall take advantage of the reformulation \eqref{ekb-scaled} in terms of the drift velocity, and the rewriting of the equilibrium equation in \eqref{ekb-scaledstrong}.

If we introduce $m = \rho u$, then (smooth)  solutions of  \eqref{ek} in the diffusive regime satisfy
\begin{align*}
& \partial_t \eta(\rho,m,\nabla \rho) + \frac{1}{\epsilon} \dive \Bigg ( \frac{1}{2}m \frac{|m|^2}{\rho^2} + m \left( h'(\rho) + \frac{1}{2}k'(\rho)|\nabla \rho|^2 
-\dive (k(\rho)\nabla \rho) \right ) +  \\
&\   k(\rho)\nabla \rho\dive m \Bigg ) =
-\frac{1}{\epsilon^2} \frac{|m|^2}{\rho} \leq 0,
\end{align*}
while (smooth) solutions of \eqref{ek-scaledstrong} satisfy the following energy dissipation identity:
\begin{align}\label{etaq-strong}
& \partial_t  \eta(\bar{\rho}, \bar{m}, \nabla \bar \rho    )  + \frac{1}{\epsilon} \dive Q(\bar{\rho}, \bar{m}, \nabla \bar \rho) = - \frac{1}{\epsilon^2} \frac{ |\bar{m}|^2}{\bar
\rho} + \frac{\bar{m}}{\bar{\rho}} \cdot \bar{e}.   
\end{align}
It is worth to observe here that  \eqref{etaq-strong} is a rewriting of the classical energy relation valid for the solution $\bar\rho$ to the equilibrium gradient flow equation \eqref{gf}. 
At this point, the main difference here with respect to the arguments in  \cite{GLT,LT,LT2}
  relies on the fact that we use the notation of \cite{Bresh}: 
  we introduce a fictitious velocity $v = \sqrt{\frac{k(\rho)}{\rho}} \nabla \rho$ and correspondingly its transport equation along the velocity $u$ (see \eqref{ekb-scaled}$_3$). 
  This leads us to define a ``new'' entropy-entropy flux pair $(\eta,Q)$ related to the  ``new'' potential energy
  $$F(\rho,J) = h(\rho) + \frac{1}{2} \frac{|J|^2}{\rho},$$
  where $J=\rho v$. 
  Hence, the entropy rewrites as follows:
$$\eta(\rho, m , J) = \frac{1}{2} \frac{|m|^2}{\rho} + h(\rho) + \frac{1}{2} \frac{|J|^2}{\rho},$$
 while its   flux $Q$ is given by:
$$Q(\rho, m, J) = \frac{1}{2} m   \frac{|m|^2}{\rho^2} + mh'(\rho) + \frac{1}{2} m   \frac{|J|^2}{\rho^2}.$$
We get:
\begin{align}\label{single eta}
\partial_t \eta(\rho,m,J) + \frac{1}{\epsilon} \dive Q(\rho,m,J) = \frac{1}{\epsilon} \frac{m}{\rho} \cdot \dive S_1 - \frac{1}{\epsilon} \frac{J}{\rho} \cdot \dive S_2 - \frac{1}{\epsilon^2} \frac{|m|^2}{\rho},
\end{align}
while for the regular solution of the parabolic equation we get:
\begin{align}\label{single etabar}
\partial_t {\eta}(\bar \rho,\bar m, \bar J) + \frac{1}{\epsilon} \dive {Q}(\bar \rho,\bar m, \bar J)&  = \frac{1}{\epsilon} \frac{\bar{m}}{\bar{\rho}} \cdot \dive \bar{S_1} - \frac{1}{\epsilon} \frac{\bar{J}}{\bar{\rho}} \cdot \dive \bar{S_2} - \frac{1}{\epsilon^2} \frac{|\bar{m}|^2}{\bar{\rho}}+ \bar{e} \cdot \frac{\bar{m}}{\bar{\rho}}.
\end{align}
Before formally prove the relative entropy relation in the context of weak solutions, 
here we   sketch the derivation of \eqref{single eta}  for the system $\eqref{ekb-scaled}$ and state the final result. 
To this end, a direct computation shows
\begin{align*}
\partial_t \left(\frac{1}{2} \frac{|m|^2}{\rho} \right) + \frac{1}{\epsilon} \dive \left( \frac{1}{2} m   \frac{|m|^2}{\rho^2} \right) = - \frac{1}{\epsilon} u \cdot \nabla p(\rho) + \frac{1}{\epsilon} u \cdot \dive S_1 - \frac{1}{\epsilon^2} \rho |u|^2,
\end{align*}
and
\begin{align*}
\partial_t F(\rho, J) & = \partial_t \left( h(\rho) + \frac{1}{2} \frac{|J|^2}{\rho} \right)= - \frac{1}{\epsilon} \dive \left( m   \left(h'(\rho) + \frac{1}{2}|v|^2 \right) \right)
\\
&\  + \frac{1}{\epsilon} u \cdot \nabla p(\rho) 
 - \frac{1}{\epsilon} v \cdot \dive S_2,
\end{align*}
leading to \eqref{single eta}.
In this framework,  the relative entropy is defined as:
\begin{align*}
\eta(\rho,m,J|\bar{\rho},\bar{m},\bar{J}) &=  \eta(\rho,m,J) - \eta(\bar{\rho},\bar{m},\bar{J}) - \eta_{\rho}(\bar{\rho},\bar{m},\bar{J})(\rho- \bar{\rho})  
\\
&\ - \eta_{m}(\bar{\rho},\bar{m},\bar{J})\cdot(m-\bar{m}) - \eta_{J}(\bar{\rho},\bar{m},\bar{J})\cdot(J-\bar{J}).
\end{align*}
 When both  solutions  are  regular, it verifies the following relation:
\begin{align*}
& \partial_t \eta (\rho,m,J| \bar{\rho}, \bar{m}, \bar{J}) + \frac{1}{\epsilon} \dive_x Q(\rho,m,J|\bar{\rho}, \bar{m}, \bar{J}   ) = \\ & - \frac{1}{\epsilon} \rho \nabla \bar{u} : (u-\bar{u}) \otimes (u-\bar{u}) - \frac{1}{\epsilon^2} \rho |u-\bar{u}|^2 - \frac{\rho}{\bar{\rho}} \bar{e}\cdot(u-\bar{u}) - \frac{1}{\epsilon}p(\rho| \bar{\rho}) \dive \bar{u} \\ & - \frac{1}{\epsilon} \rho \; \nabla \bar{u} : (v-\bar{v}) \otimes (v-\bar{v})  - \frac{1}{\epsilon}\rho( \mu''(\rho) \nabla \rho - \mu''(\bar{\rho}) \nabla \bar{\rho})) \cdot ((v - \bar{v}) \dive \bar{u} \\
& \ - (u - \bar{u}) \dive \bar{v}) \\ & - \frac{1}{\epsilon} \rho(\mu'(\rho) - \mu'(\bar{\rho}))((v-\bar{v})) \cdot \nabla (\dive\bar{u}) - (u - \bar{u}) \cdot \nabla (\dive \bar{v})),
\end{align*}
where the relative flux is given by
\begin{align*}
Q(\rho,u,v | \bar{\rho}, \bar{u}, \bar{v}) = & \rho u \frac{1}{2}|u-\bar{u}|^2 + \rho u (h'(\rho) - h'(\bar{\rho}) )+ \frac{1}{2} \rho u |v-\bar{v}|^2  \\ &  - \mu(\rho)\nabla v (u-\bar{u}) -  \frac{1}{2} \lambda(\rho)\dive v (u-\bar{u}) - \\ &  \mu(\rho) \nabla u (\bar{v}-v) - \frac{1}{2} \lambda(\rho)\dive u (\bar{v}-v) \\ &  - \mu(\bar{\rho}) \frac{\rho}{\bar{\rho}} \nabla \bar{v} (\bar{u}-u) + \mu(\bar{\rho}) \frac{\rho}{\bar{\rho}}\nabla \bar{u} (\bar{v}-v) \\ & - \rho\left( \frac{\mu(\rho)}{\rho} - \frac{\mu(\bar{\rho})}{\bar{\rho}} \right) (  \nabla \bar{u} (v-\bar{v}) - \nabla \bar{v}(u-\bar{u})) - \\ & \frac{1}{2}\left(\lambda(\rho) - \frac{\rho}{\bar{\rho}}\lambda(\bar{\rho}) \right)((v- \bar{v})\dive \bar{u} - (u-\bar{u})\dive \bar{v}))
\end{align*}
and  the relative entropy can be also rewritten as
$$  \eta( \rho, m,J| \bar{\rho}, \bar{m}, \bar{J}) =   \frac{1}{2} \rho |u - \bar{u}|^2 + \frac{1}{2} \rho |v-\bar{v}|^2 + h(\rho|\bar{\rho}).$$
%
Now, to generalize this relation for weak solutions, let us first state the precise definition of the latter, based on the one introduced in \cite{LT2}. 
We recall that we shall consider here $\gamma$--law pressures $p(\rho) = \rho^\gamma$, while the capillarity coefficient $k(\rho)$ is given by $k(\rho) = \frac{(s+3)^2}{4} \rho^s$, for which we obtain $\mu(\rho) = \rho^{\frac{s+3}{2}}$, with the conditions $\gamma > 1$, $s+2 \leq \gamma$ and $s \geq -1$.

\begin{definition}\label{ws}
 ($\rho$, $m$, $J$) with $\rho \in C([0, \infty);(L^1(\mathbb{T}^n))$ $(m,J) \in C([0, \infty);(L^1(\mathbb{T}^n))^{2n})$, $\rho \geq 0$, is a weak (periodic) solution of $\eqref{ekb-scaled}$ if 
$$ \sqrt{\rho}u,  \sqrt{\rho}v \in L^{\infty}((0,T);L^2(\mathbb{T}^n)^n),\  \rho \in C([0, \infty);(L^\gamma(\mathbb{T}^n)),$$
 and $(\rho, m,J)$ satisfy for all $\psi \in C^1_c([0, \infty); C^1(\mathbb{T}^n))$ and for all $\phi, \varphi \in C^1_c([0, \infty); C^1(\mathbb{T}^n)^n)$: 
 \begin{align*}
& - \iint_{(0,+\infty)\times\mathbb{T}^n}\Bigg (\rho \psi_t + \frac{1}{\epsilon} m \cdot \nabla_x \psi \Bigg )dxdt = \int_{\mathbb{T}^n} \rho(x,0)\psi(x,0);\\
&  -  \iint_{(0,+\infty)\times\mathbb{T}^n} \Bigg[m \cdot (\phi)_t + \frac{1}{\epsilon}\left(\frac{m \otimes m}{\rho} : \nabla_x \phi \right) + \frac{1}{\epsilon} p(\rho) \dive \phi  \\
& + \frac{1}{\epsilon}\left( \mu(\rho) v \cdot \nabla \dive (\phi) + \nabla \mu(\rho) \cdot (\nabla \phi v ) + \frac{1}{2} \nabla \lambda(\rho) \cdot v \dive \phi + \frac{1}{2} \lambda(\rho) v \cdot \nabla \dive \phi \right)  \Bigg]dxdt  \\ 
  & \ = - \frac{1}{\epsilon^2} \iint_{(0,+\infty)\times\mathbb{T}^n}m \cdot \phi dxdt + \int_{\mathbb{T}^n} m(x,0) \cdot\phi(x,0)dx,
\end{align*}
where we have used the identity 
$$\displaystyle{S= - p(\rho) \mathbb{I} + S_1= - p(\rho)\mathbb{I} + \mu(\rho)\nabla v + \frac{1}{2}\lambda(\rho)\dive v \mathbb{I}};$$
\begin{align*}
& - \iint_{(0,+\infty)\times\mathbb{T}^n} \Bigg[J \cdot\varphi_t + \frac{1}{\epsilon}\left(\frac{J \otimes m}{\rho} : \nabla_x \varphi \right) - \frac{1}{\epsilon}\Bigg( \mu(\rho) u \cdot   (\nabla \dive \varphi ) + \nabla \mu(\rho) \cdot (\nabla \varphi u ) \\
&\ + \frac{1}{2} \nabla \lambda(\rho) \cdot u \dive \varphi + \frac{1}{2}  \lambda(\rho) u \cdot  \nabla \dive \varphi \Bigg)  \Bigg]dxdt =  \int_{\mathbb{T}^n} J(x,0) \cdot \varphi(x,0)dx,
\end{align*}
where we have used the identity 
$$\displaystyle{S_2= \mu(\rho)^t\nabla u + \frac{1}{2}\lambda(\rho)\dive u \mathbb{I}}.$$

If in addition $ \eta (\rho,m,J) \in C([0, \infty); L^1(\mathbb{T}^n))$ and $(\rho,m,J)$ satisfy
\begin{align}\label{diss}
& \iint_{(0,+\infty)\times\mathbb{T}^n} \left( \eta(\rho,m,J) \right) \dot{\theta}(t) dxdt \leq  \int_{\mathbb{T}^n} \left( \eta(\rho,m,J)\right)|_{t=0} \theta(0)dx
\nonumber\\ 
&\ - \frac{1}{\epsilon^2}  \iint_{(0,+\infty)\times\mathbb{T}^n} \frac{|m|^2}{\rho} \theta(t) dxdt
\end{align}
for any non-negative $\theta \in W^{1,\infty}[0, \infty)$ compactly supported on $[0,\infty)$, then $(\rho,m,J)$ is called a \emph{dissipative} weak solution.

If $\eta(\rho,m,J) \in C([0,\infty);L^1(\mathbb{T}^n))$ and $(\rho,m,J)$ satisfy $\eqref{diss}$ as an equality, then $(\rho,m,J)$ is called a \emph{conservative} weak solution.

We say that a dissipative (or conservative) weak (periodic) solution $(\rho,m,J)$ of $\eqref{ekb-scaled}$ with $\rho \geq 0$ has finite total mass and energy if
$$ \sup_{t \in (0,T)} \int_{\mathbb{T}^n} \rho dx \leq M <+ \infty,$$
and
$$ \sup_{t \in (0,T)} \int_{\mathbb{T}^n}  \eta(\rho,m,J) dx \leq E_o <+ \infty.$$
\end{definition}
\begin{theorem}\label{relativentropy}
Let $(\rho,m,J)$ be a dissipative (or conservative) weak solution of $\eqref{ekb-scaled}$ with finite total mass and energy according to Definition \ref{ws}, and let $\bar{\rho}$ be a smooth solution of $\eqref{gf}$. Then
\begin{align}\label{eq:relen}
& \int_{\mathbb{T}^n} \eta(\rho,m,J| \bar{\rho}, \bar{m}, \bar{J})(t)dx \leq \int_{\mathbb{T}^n} \eta(\rho,m,J| \bar{\rho}, \bar{m}, \bar{J})(0)dx
\nonumber\\ 
& - \frac{1}{\epsilon^2} \iint_{(0,t) \times \mathbb{T}^n}  \rho |u-\bar{u}|^2 dxd\tau  - \frac{1}{\epsilon} \iint_{(0,t) \times \mathbb{T}^n} \rho \nabla \bar{u}: (u-\bar{u}) \otimes (u-\bar{u})dxdt 
\nonumber\\ 
& - \frac{1}{\epsilon} \iint_{(0,t) \times \mathbb{T}^n}p(\rho|\bar{\rho}) \dive \bar{u} dxd\tau  
- \frac{1}{\epsilon} \iint_{(0,t) \times \mathbb{T}^n} \rho \; \nabla \bar{u}: (v-\bar{v}) \otimes (v-\bar{v}) dxd\tau 
\nonumber\\ 
& - \iint_{(0,t) \times \mathbb{T}^n} e(\bar{\rho},\bar{m}) \cdot \frac{\rho}{\bar{\rho}} (u-\bar{u})dxd\tau 
\nonumber\\ 
& - \frac{1}{\epsilon} \iint_{(0,t) \times \mathbb{T}^n} \rho[(\mu''(\rho)\nabla \rho - \mu''(\bar{\rho})\nabla \bar{\rho})\cdot((v-\bar{v})\dive \bar{u} - (u-\bar{u})\dive \bar{v})]dxd\tau \nonumber\\ 
& - \frac{1}{\epsilon}\iint_{(0,t) \times \mathbb{T}^n} \rho (\mu'(\rho)- \mu'(\bar{\rho}))[(v-\bar{v})\cdot\nabla \dive \bar{u}-(u- \bar{u})\cdot\nabla \dive \bar{v}]dxd\tau,
\end{align}
where 
\begin{equation}\label{eq:defbarutheo}
\bar{m} = \bar{\rho}\bar{u}= \epsilon \left(- \nabla p(\bar{\rho})+   \dive S_1(\bar{\rho})\right); \ \bar{J}= \bar{\rho}\bar{v} = \nabla \mu(\bar{\rho}).
\end{equation}
\end{theorem}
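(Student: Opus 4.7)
The plan is to follow the classical relative entropy strategy, adapted to the augmented formulation \eqref{ekb-scaled} introduced in \cite{Bresh}. Writing
$$\eta(\rho,m,J\,|\,\bar\rho,\bar m,\bar J) = \eta(\rho,m,J) - \eta(\bar\rho,\bar m,\bar J) - \eta_\rho(\bar\rho,\bar m,\bar J)(\rho-\bar\rho) - \bar u\cdot(m-\bar m) - \bar v\cdot(J-\bar J),$$
with $\eta_\rho(\bar\rho,\bar m,\bar J)=h'(\bar\rho)-\tfrac12|\bar u|^2-\tfrac12|\bar v|^2$, the idea is to estimate $\int\eta(\rho,m,J)\,dx$ via the dissipation inequality \eqref{diss}, and to generate the remaining five cross terms by inserting $(\bar\rho,\bar m,\bar J)$ as test functions in the weak formulation of Definition \ref{ws}. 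Since $\bar\rho$ is smooth, all choices below are admissible after a time mollification and a Lipschitz cut-off $\theta(t)$ compatible with \eqref{diss}.

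First I would plug $\psi = \eta_\rho(\bar\rho,\bar m,\bar J)$ into the mass weak formulation, $\phi=\bar u$ into the momentum one, and $\varphi = \bar v$ into the drift one. Summing the three identities, the time derivatives reassemble, via the chain rule and the smooth equations \eqref{ekb-scaledstrong}, into $-\partial_t[\eta_\rho(\rho-\bar\rho)+\bar u\cdot(m-\bar m)+\bar v\cdot(J-\bar J)]$ plus terms involving $\partial_t\bar\rho$, $\partial_t\bar m$, $\partial_t\bar J$. Replacing these time derivatives using \eqref{ekb-scaledstrong} produces the error contribution $-\int\bar e\cdot(\rho/\bar\rho)(u-\bar u)\,dx$ and a collection of flux and source terms. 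The convective part of the flux reorganises, modulo a full divergence which vanishes on $\mathbb{T}^n$, into the standard quadratic forms $\tfrac1\epsilon\rho\nabla\bar u:(u-\bar u)\otimes(u-\bar u)$ and $\tfrac1\epsilon\rho\nabla\bar u:(v-\bar v)\otimes(v-\bar v)$, together with the relative pressure term $\tfrac1\epsilon p(\rho|\bar\rho)\dive\bar u$. The $1/\epsilon^2$ dissipations combine algebraically as $\tfrac{1}{\epsilon^2}|m|^2/\rho - \tfrac{2}{\epsilon^2}\bar u\cdot m + \tfrac{1}{\epsilon^2}|\bar u|^2\rho = \tfrac{1}{\epsilon^2}\rho|u-\bar u|^2$, producing the sign-good damping term.

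The main technical step is the treatment of the capillarity stresses $S_1,S_2$. Here I would isolate
$$\frac{1}{\epsilon}\int \frac{m}{\rho}\cdot\dive S_1\,dx - \frac{1}{\epsilon}\int \frac{J}{\rho}\cdot\dive S_2\,dx$$
and its barred analogue, and integrate by parts, using $\lambda(\rho)=2(\mu'(\rho)\rho-\mu(\rho))$ and $J=\nabla\mu(\rho)$ (Bohm identity). Expanding $\mu(\rho)$ and $\lambda(\rho)$ around $\bar\rho$ at first and second order, and regrouping the integrals around the differences $(u-\bar u)$, $(v-\bar v)$, $\mu'(\rho)-\mu'(\bar\rho)$ and $\mu''(\rho)\nabla\rho - \mu''(\bar\rho)\nabla\bar\rho$, produces precisely the last two lines of \eqref{eq:relen}, following the cancellations already exploited in \cite{Bresh} at the level of strong solutions.

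The hard part will be the rigorous justification for weak solutions, since $\rho,m,J$ have only the integrability granted by Definition \ref{ws} and one cannot differentiate products freely. As in \cite{LT2} this is handled by mollifying the smooth test fields in time against the weak formulation, then passing to the limit using the finite-energy bound $\sqrt\rho u,\sqrt\rho v\in L^\infty_t L^2_x$, the assumption $\rho\in C([0,\infty);L^\gamma)$, and the integrability of $\mu_L,\lambda_L$ controlled by the energy, so that all products of test-function derivatives with $\rho,m,J$ make sense. Finally, combining the resulting identity for the cross terms with the one-sided dissipation \eqref{diss} gives an inequality rather than an equality, which is exactly \eqref{eq:relen}; the dissipative and conservative cases are treated uniformly since the extra capillarity terms on the right of \eqref{eq:relen} come entirely from the smooth equation and do not require an equality in \eqref{diss}.
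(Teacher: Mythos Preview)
Your proposal is correct and follows essentially the same route as the paper: test the weak formulations with $\psi=h'(\bar\rho)-\tfrac12|\bar u|^2-\tfrac12|\bar v|^2$, $\phi=\bar u$, $\varphi=\bar v$ against a Lipschitz time cutoff, combine with the dissipation inequality \eqref{diss} and with the smooth entropy identity for $(\bar\rho,\bar m,\bar J)$, then reorganise the capillarity stresses using $\lambda(\rho)=2(\rho\mu'(\rho)-\mu(\rho))$ and $\nabla\mu(\rho)=\rho v$. Two small corrections: the mention of $\mu_L,\lambda_L$ is irrelevant here (those Lam\'e coefficients appear only in the Navier--Stokes--Korteweg case of Section~\ref{sec:NSK}); and the quadratic term $\rho\,\nabla\bar u:(v-\bar v)\otimes(v-\bar v)$ does \emph{not} come from the convective flux of $J$---that flux produces a mixed term $\rho\,\nabla\bar v:(v-\bar v)\otimes(u-\bar u)$, which cancels against the corresponding piece of the capillarity stress once you use $\nabla\mu(\rho)-\tfrac{\rho}{\bar\rho}\nabla\mu(\bar\rho)=\rho(v-\bar v)$, leaving precisely the $(v-\bar v)\otimes(v-\bar v)$ contribution.
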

\begin{proof}
Let $(\rho,m,J)$ be a weak dissipative (or conservative) weak solution of $\eqref{ekb-scaled}$ according to  Definition \ref{ws} and let $\bar \rho$ 
 be a strong solution of $\eqref{gf}$, so that, using \eqref{eq:defbarutheo}, $(\bar{\rho}, \bar{m},\bar{J})$ satisfies $\eqref{ekb-scaledstrong}$. We consider the following function $\theta(\tau)$ in the energy (in)equality  \eqref{diss} of  Definition \ref{ws}:
\begin{equation*}
\theta(\tau)=	
\begin{cases}
		1,    & \hbox{ for } 0 \leq \tau < t, \\
		  \frac{t-\tau}{\mu} + 1, & \hbox{ for } t \leq \tau < t+\tau, \\
	 0, & \hbox{ for } \tau \geq t +\mu.
		\end{cases}
\end{equation*}
Then, as $\mu \rightarrow 0$,  we readily obtain:
\begin{align*}
\int_{\mathbb{T}^n} (\eta(\rho,m,J))|_{\tau =0}^t \leq - \frac{1}{\epsilon^2} \iint_{(0,t)\times\mathbb{T}^n} \frac{|m|^2}{\rho} dxd\tau.
\end{align*}
Moreover, by a direct integration in $(0,t) \times \mathbb{T}^n$ of  $\eqref{single etabar}$   we get:
\begin{equation}
\begin{split}
\int_{\mathbb{T}^n} \eta(\bar{\rho},\bar{m}, \bar{J})|_{\tau = 0}^t = & - \frac{1}{\epsilon^2} \iint_{(0,t)\times\mathbb{T}^n} \frac{|\bar{m}|^2}{\bar{\rho}}dxd\tau
+ \iint_{(0,t)\times\mathbb{T}^n} \frac{\bar{m}}{\bar{\rho}} \cdot \bar{e} 
\end{split}
\end{equation}
because 
\begin{align}\label{rhobar}
0 &= - \frac{1}{\epsilon}\iint_{(0,t)\times\mathbb{T}^n}\big (\nabla \bar{u} : \bar{S_1} - \nabla \bar{v} :  \bar{S_2} \big ) dxd\tau \nonumber\\
& = \frac{1}{\epsilon}\iint_{(0,t)\times\mathbb{T}^n}\big (\bar{u} \cdot \dive \bar{S_1} - \bar{v} \cdot \dive \bar{S_2} \big ) dxd\tau \nonumber\\ 
&= \iint_{(0,t)\times\mathbb{T}^n} \big (\nabla \mu (\bar \rho) \cdot (\nabla \bar v \bar u - \nabla \bar u \bar v) + 
\mu(\bar \rho) (\bar u \cdot \nabla \dive \bar v - \bar v \cdot \nabla \dive \bar u)\big)dxd\tau \nonumber\\
& \  + \frac{1}{2\epsilon} \iint_{(0,t)\times\mathbb{T}^n} \big( \nabla\lambda(\bar \rho)\cdot(\bar u  \dive \bar v - \bar v \dive \bar u) + \lambda(\bar \rho)(\bar u \cdot \nabla \dive \bar v  - \bar v \cdot  \nabla \dive \bar u)\big) dxd\tau,
\end{align}  
being $\bar\rho $ periodic and using  the definitions of $\bar{S_1}$ and $\bar{S_2}$:
\begin{align*}
&\bar{S_1}=  \mu(\bar\rho)\nabla \bar v + \frac{1}{2}\lambda(\bar\rho)\dive \bar v \mathbb{I}; \\
& \bar{S_2} = \mu(\bar\rho)^t\nabla \bar u + \frac{1}{2}\lambda(\bar\rho)\dive \bar u \mathbb{I}.
\end{align*}
Indeed we have:
\begin{equation*}
\frac{1}{\epsilon}\iint_{(0,t)\times\mathbb{T}^n}\nabla \bar{u} :  \bar{S_1}  dxd\tau = 
 \frac{1}{\epsilon} \iint_{(0,t)\times\mathbb{T}^n}\left ( \mu(\bar{\rho}) \nabla \bar{u} : \nabla \bar{v} +  \frac{1}{2} \lambda(\bar{\rho}) \dive \bar{v} \dive \bar{u} \right )dxd\tau,
\end{equation*}
and
\begin{equation*}
\frac{1}{\epsilon}\iint_{(0,t)\times\mathbb{T}^n}\nabla \bar{v} :  \bar{S_2}  dxd\tau 
 = \frac{1}{\epsilon}\iint_{(0,t)\times\mathbb{T}^n} \left (\mu(\bar{\rho}) \nabla \bar{v} : {}^t\nabla \bar{u} +  \frac{1}{2} \lambda(\bar{\rho}) \dive \bar{u} \dive \bar{v} \right )dxd\tau.
\end{equation*}
Therefore, since 
$$\nabla \bar{v} = \nabla\left ( \frac{\mu'(\bar\rho)}{\bar\rho}\nabla\bar\rho\right ) = \nabla^2 M(\bar\rho)
$$
 is symmetric,
 it holds: 
 $$\nabla \bar{u}: \nabla \bar{v}- \nabla \bar{v}: {}^t\nabla \bar{u} = \nabla \bar{u}: \nabla \bar{v}- \nabla \bar{u}: {}^t\nabla \bar{v} = \nabla \bar{u}: \nabla \bar{v}- \nabla \bar{u}: \nabla \bar{v} = 0,$$
and the integral
\begin{align*}
& \frac{1}{\epsilon}\iint_{(0,t)\times\mathbb{T}^n}\big ( \nabla \bar{u} : \bar{S_1} - \nabla \bar{v} :   \bar{S_2} \big )dxd\tau  \\
& \ = \frac{1}{\epsilon}\iint_{(0,t)\times\mathbb{T}^n}\left ( \mu(\bar{\rho}) (\nabla \bar{u}: \nabla \bar{v}- \nabla \bar{v}: {}^t\nabla \bar{u}) + \frac{1}{2} \lambda(\bar{\rho}) (\dive \bar{v}\dive \bar{u} - \dive\bar{v}\dive\bar{u}) \right )dxd\tau 
\end{align*}
vanishes. 

Now we want to evaluate the linear part of the relative entropy for the difference $(\rho - \bar{\rho}, m - \bar{m}, J- \bar{J})$ choosing suitable test functions in the weak formulation (according to  Definition \ref{ws}) of the equation satisfied by these differences, namely:
 \begin{equation}\label{psi}
-  \iint_{[0,\infty) \times \mathbb{T}^n} \left( \psi_t(\rho- \bar{\rho}) + \frac{1}{\epsilon} \psi_{x_i}(m_i-\bar{m_i}) \right)dxd\tau = \int_{\mathbb{T}^n} (\rho - \bar{\rho})\psi|_{t=0}dx,
 \end{equation}
\begin{equation}\label{phi}
\begin{split}
& - \iint_{[0,\infty) \times \mathbb{T}^n} \phi_{t} \cdot (m-\bar{m}) + \frac{1}{\epsilon}  \left( \frac{m_i m_j}{\rho} - \frac{\bar{m_i}\bar{m_j}}{\bar{\rho}} \right)\partial_{x_j}\phi_{i} + \frac{1}{\epsilon} [p(\rho)- p(\bar{\rho})] \partial_{x_i} \phi_{i} dxd\tau \\ &  
- \frac{1}{\epsilon} \iint_{[0,\infty) \times \mathbb{T}^n} (\mu(\rho)v_i-\mu(\bar\rho)\bar{v_i}) \partial_{x_i}  \partial_{x_j} \phi_{j} + (\partial_{x_i} \mu(\rho)v_j - \partial_{x_i}\mu(\bar\rho) 
\bar{v_j})\partial_{x_j} \phi_{i}
dxd\tau 
 \\ & - \frac{1}{\epsilon} \iint_{[0,\infty) \times \mathbb{T}^n} \frac{1}{2}( \partial_{x_i}( \lambda(\rho)) v_i   - \partial_{x_i}(\lambda(\bar \rho)) \bar{v_i}) \partial_{x_j}\phi_{j} + \frac{1}{2} (\lambda(\rho) v_i-
 \lambda(\bar\rho)\bar{v_i}) \partial_{x_i} \partial_{x_j}\phi_{j} dxd\tau  \\ 
 & \ =  - \frac{1}{\epsilon^2} \iint_{[0, \infty) \times \mathbb{T}^n} (m - \bar{m}) \cdot \phi dx d\tau - \iint_{[0, \infty) \times T^n} \bar{e} \cdot \phi dx d\tau + \int_{\mathbb{T}^n}  
  (m - \bar m)\cdot \phi|_{t=0}dx
\end{split}
\end{equation}
and
\begin{equation}\label{phi2}
\begin{split}
& - \iint_{[0,\infty) \times \mathbb{T}^n} \left( \varphi_t \cdot (J-\bar{J}) \right) + \frac{1}{\epsilon} \left( \frac{J_{i} m_{j}}{\rho} - \frac{\bar{J_{i}}\bar{m_{j}}}{\bar{\rho}} \right) \partial_{x_j}\varphi_i dxd\tau \\ 
& + \frac{1}{\epsilon} \iint_{[0,\infty) \times \mathbb{T}^n} (\mu(\rho)u_i-\mu(\bar\rho)\bar{u_i}) \partial_{x_i}  \partial_{x_j} \varphi_j   + 
(\partial_{x_i} \mu(\rho)u_j-\partial_{x_i}\mu(\bar\rho)\bar{u_j})\partial_{x_j} \varphi_i dxd\tau \\ 
& + \frac{1}{\epsilon} \iint_{[0,\infty) \times \mathbb{T}^n} \frac{1}{2}(\partial_{x_i} \lambda(\rho)u_i-\partial_{x_i} \lambda(\bar\rho)\bar{u_i})  \partial_{x_j} \varphi_j + \frac{1}{2}(\lambda(\rho)u_i- \lambda(\bar\rho)\bar{u_i})  \partial_{x_i} \partial_{x_j} \varphi_j   dxd\tau \\
&\ = \int_{\mathbb{T}^n}   (J - \bar J)\cdot \varphi|_{t=0}dx,
\end{split}
\end{equation}
where $\psi,\phi, \varphi$ are Lipschitz test functions, $\phi, \varphi$ vector--valued, compactly supported in $[0, + \infty)$ in time and periodic in space. In the above relation we choose in particular  
\begin{align*}
&\psi = \theta(\tau) \left( h'(\bar{\rho})- \frac{1}{2} \frac{|\bar{m}|^2}{\bar{\rho^2}} - \frac{|\bar{J}|^2}{ \bar{\rho^2}} \right) \text{ \; and} \\
& \Phi = (\phi, \varphi)= \theta(\tau) \left( \frac{\bar{m}}{\bar{\rho}}, \frac{\bar{J}}{\bar{\rho}} \right), \text{where $\theta(\tau)$ is defined above.}
\end{align*}
Then, letting $\mu \rightarrow 0$ in $\eqref{psi}$  we obtain
\begin{align*}
& \int_{\mathbb{T}^n}\left( h'(\bar{\rho})- \frac{1}{2} \frac{|\bar{m}|^2}{\bar{\rho^2}} - \frac{|\bar{J}|^2}{ \bar{\rho^2}} \right)(\rho - \bar{\rho} ) \mid_{\tau = 0}^t dx \\ 
& - \iint_{[0,t] \times \mathbb{T}^n} \left[ \partial_{\tau} \left(h'(\bar{\rho}) - \frac{1}{2} \frac{|\bar{m}|^2}{\bar{\rho^2}} - \frac{|\bar{J}|^2}{ \bar{\rho^2}} \right) (\rho- \bar{\rho}) dx d\tau \right] 
\\ & - \frac{1}{\epsilon} \iint_{[0,t] \times \mathbb{T}^n} \nabla_x \left(h'(\bar{\rho}) - \frac{|\bar{m}|^2}{ \bar{\rho^2}} - \frac{|\bar{J}|^2}{ \bar{\rho^2}} \right) \cdot (m- \bar{m})dxd\tau = 0.
\end{align*}
From $\eqref{phi}$:
\begin{align*}
& \int_{\mathbb{T}^n} \frac{\bar{m}}{\bar{\rho}} \cdot (m - \bar{m})|_{\tau=0}^tdx - \iint_{[0,t] \times \mathbb{T}^n} \partial_{\tau} \left( \frac{\bar{m}}{\bar{\rho}} \right) \cdot (m-\bar{m}) dxd\tau
\\ &  - \frac{1}{\epsilon} \iint_{[0,t] \times \mathbb{T}^n} \left [  \left( \frac{m_{i} m_{j}}{ \rho}   - \frac{ \bar{m_{i}} \bar{m_{j}} }{ \bar{\rho} } \right) \partial_{x_j}\left( \frac{\bar{m_{i}}}{\bar{\rho}} \right) + (p(\rho)- p(\bar{\rho})) \dive \left( \frac{\bar{m}}{\bar{\rho}} \right) \right] dxd\tau 
\\ 
& - \frac{1}{\epsilon} \iint_{[0,t] \times \mathbb{T}^n}\left [ \mu(\rho)(v-\bar{v}) \cdot \nabla\dive \left( \frac{\bar{m}}{\bar{\rho}} \right)
+ \nabla \mu(\rho)\cdot \nabla \left( \frac{\bar{m}}{\bar{\rho}} \right)(v-\bar{v}) \right ]dxd\tau 
\\ 
& - \frac{1}{\epsilon} \int \int_{[0,t] \times \mathbb{T}^n} \left [(\mu(\rho) - \mu(\bar\rho)) \bar v \cdot \nabla\dive \left( \frac{\bar{m}}{\bar{\rho}} \right) +
\nabla( \mu(\rho) -\mu(\bar\rho))  \nabla \left( \frac{\bar{m}}{\bar{\rho}} \right) \bar v \right ] dxd\tau
\\ 
&  - \frac{1}{2\epsilon} \iint_{[0,\infty) \times \mathbb{T}^n} \left( \nabla \lambda(\rho)\cdot (v-\bar{v}) \dive \left( \frac{\bar{m}}{\bar{\rho}} \right) +  \lambda(\rho)(v-\bar{v}) \cdot \nabla 
\dive \left( \frac{\bar{m}}{\bar{\rho}} \right) \right )dxd\tau 
\\ 
& 
-\frac{1}{ 2 \epsilon} \iint_{[0,\infty) \times \mathbb{T}^n} \left [ \nabla(\lambda(\rho) - \lambda(\bar \rho)) \cdot \bar v \dive \left( \frac{\bar{m}}{\bar{\rho}} \right) +  (\lambda(\rho) - 
\lambda(\bar \rho)) \bar v \cdot \nabla  \dive \left( \frac{\bar{m}}{\bar{\rho}} \right) \right ]dxd\tau 
 \\ 
&=- \frac{1}{\epsilon^2} \iint_{[0,t] \times \mathbb{T}^n} \frac{\bar{m} }{ \bar{\rho}} \cdot (m -\bar{m}) dxd\tau - \iint_{[0,t] \times \mathbb{T}^n} \frac{\bar{m}}{\bar{\rho}} \cdot \bar{e} dxd\tau.
\end{align*}
Analogously,  from $\eqref{phi2}$:
\begin{align*}
& \int_{\mathbb{T}^n} \frac{\bar{J}}{\bar{\rho}} \cdot (J - \bar{J})|_{\tau=0}^tdx - \iint_{[0,t] \times \mathbb{T}^n} \partial_{\tau} \left( \frac{ \bar{J} }{ \bar{\rho}} \right) \cdot (J - \bar{J}) dxd\tau 
\\ & 
- \frac{1}{\epsilon} \iint_{[0,t] \times \mathbb{T}^n}   \left( \frac{m_{j} J_{i} }{\rho} - \frac{ \bar{m_{j}}  \bar{ J_{i}  } }{\bar{\rho}} \right)\partial_{x_j} \left( \frac{\bar{J_{i}} }{\bar{\rho}} \right) dxd\tau  
 \\ & 
 + \frac{1}{\epsilon} \iint_{[0,t] \times \mathbb{T}^n}\left [ \mu(\rho)\left (\frac{m}{\rho} - \frac{\bar{m}}{\bar{\rho}}\right ) \cdot \nabla (\dive \bar{v}) + \nabla \mu(\rho) \cdot \nabla \bar{v}
\left (\frac{m}{\rho} - \frac{\bar{m}}{\bar{\rho}}\right )  \right ]dxd\tau 
 \\
 & 
 + \frac{1}{\epsilon} \iint_{[0, t) \times \mathbb{T}^n} \left [(\mu(\rho) - \mu(\bar\rho) ) \frac{\bar{m}}{\bar{\rho}}\cdot \nabla \dive \bar v + \nabla( \mu(\rho) -\mu(\bar\rho)) \nabla \bar v \frac{\bar{m}}{\bar{\rho}} \right ] dxd\tau
 \\ & 
 + \frac{1}{2\epsilon} \iint_{[0,t] \times \mathbb{T}^n} \left [ \nabla \lambda(\rho) \cdot \left (\frac{m}{\rho} - \frac{\bar{m}}{\bar{\rho}}\right )\dive \bar{v} + \lambda(\rho)
 \left (\frac{m}{\rho} - \frac{\bar{m}}{\bar{\rho}}\right ) \cdot \nabla \dive \bar{v} \right ]dxd\tau 
 \\ & 
+ \frac{1}{2 \epsilon} \iint_{[0,t] \times \mathbb{T}^n} \nabla(\lambda(\rho) -\lambda(\bar\rho)) \cdot \frac{\bar{m}}{\bar{\rho}} \dive \bar v + (\lambda(\rho) -\lambda(\bar{\rho})) 
\frac{\bar{m}}{\bar{\rho}}  \cdot \nabla \dive \bar v dxd\tau  = 0.
\end{align*}
Combining the above relations we get:
\begin{align}\label{corradded}
& \int_{\mathbb{T}^n} \left[ \eta(\rho,m,J| \bar{\rho}, \bar{m}, \bar{J}) \right]|_{\tau=0}^t dx \leq   - \frac{1}{\epsilon^2} \iint_{[0,t]\times \mathbb{T}^n} [ \rho |u|^2 - \bar{\rho}|\bar{u}|^2 - \bar{u}(\rho u - \bar{\rho} \bar{u})]dxd\tau
\nonumber\\ 
 & - \iint_{[0,t]\times \mathbb{T}^n} \left[ \partial_{\tau} \left(h'(\bar{\rho}) - \frac{1}{2}|\bar{u}|^2 - \frac{1}{2}|\bar{v}|^2 \right)(\rho - \bar{\rho}) + \partial_{\tau}(\bar{u})(\rho u - \bar{\rho} \bar{u}) + \partial_{\tau}(\bar{v})(\rho v - \bar{\rho} \bar{v}) \right]dxd\tau
 \nonumber \\ 
  & - \frac{1}{\epsilon} \iint_{[0,t]\times \mathbb{T}^n} \nabla \left( h'(\bar{\rho}) - \frac{1}{2}|\bar{u}|^2 - \frac{1}{2}|\bar{v}|^2 \right)(\rho u - \bar{\rho} \bar{u}) dxd\tau - \frac{1}{\epsilon} \iint_{[0,\infty) \times \mathbb{T}^n} [p(\rho)-p(\bar{\rho})] \dive \bar{u} dxd\tau 
  \nonumber\\ 
  & - \frac{1}{\epsilon} \iint_{[0,t]\times \mathbb{T}^n} (\rho u_i u_j - \bar{\rho} \bar{u_i} \bar{u_j})\partial_{x_j}(\bar{u_i})  + (\rho v_i u_j - \bar{\rho} \bar{v_i} \bar{u_j})\partial_{x_j}(\bar{v_i})  dxd\tau  
  \nonumber\\ 
  & - \frac{1}{\epsilon} \iint_{[0,t]\times \mathbb{T}^n} \mu(\rho)[(v-\bar{v})  \nabla \dive \bar{u} - (u-\bar{u}) \nabla \dive \bar{v}] + \nabla \mu(\rho)[
\nabla \bar{u}(v-\bar{v}) - \nabla \bar{v}(u-\bar{u})]dxd\tau 
\nonumber\\ 
& - \frac{1}{2\epsilon} \iint_{[0,t]\times \mathbb{T}^n}  \nabla \lambda(\rho)[ (v-\bar{v}) \dive \bar{u} - (u-\bar{u}) \dive \bar{v}] +  \lambda(\rho)[(v-\bar{v}) \nabla \dive \bar{u} - (u-\bar{u}) \nabla \dive \bar{v}]dxd\tau 
\nonumber\\
& +
 \frac{1}{\epsilon} \iint_{[0,t] \times \mathbb{T}^n} \big( (\mu(\rho) - \mu(\bar\rho))(\bar u \cdot\nabla \dive \bar v - \bar v \cdot\nabla \dive \bar u ) + \nabla (\mu(\rho)-\mu(\bar\rho))\cdot(\nabla \bar v \bar u - \nabla \bar u \bar v )\big )dxd\tau 
\nonumber\\ 
& +\frac{1}{2 \epsilon}\iint_{[0,t] \times \mathbb{T}^n} \big((\lambda(\rho) -\lambda(\bar\rho)) \left(\bar u \cdot \nabla \dive \bar v  - \bar v \cdot \nabla \dive \bar u  \right) + \nabla(\lambda(\rho)-\lambda(\bar\rho))\cdot (\bar u \dive \bar v - \bar v  \dive \bar u )\big)dxd\tau.
\end{align}

First of all, let us 
observe that the last two lines of the relation above are indeed zero, as one can easily prove repeating the arguments leading to \eqref{rhobar}, with the differences $\mu(\rho)-\mu(\bar\rho)$ and $\lambda(\rho)-\lambda(\bar\rho)$ replacing $\mu(\bar\rho)$ and $\lambda(\bar\rho)$ inside the definition of the tensors $S_1$ and $S_2$. Moreover, using the relation $h''(\bar{\rho}) = p'(\bar{\rho})/ \bar{\rho}$ and the continuity equation for $\bar\rho$, we get
\begin{align*}
& - \iint_{[0,t]\times \mathbb{T}^n} \left ( \partial_{\tau}h'(\bar{\rho})(\rho- \bar{\rho}) + \frac{1}{\epsilon} \nabla h'(\bar{\rho}) (\rho u - \bar{\rho}\bar{u}) \right) dxd\tau  \\ 
& =
\frac{1}{\epsilon} \iint_{[0,t]\times \mathbb{T}^n} \left ( p'(\bar{\rho})(\rho- \bar{\rho})\dive \bar{u} +  \frac{\rho}{\bar{\rho}} \nabla p(\bar{\rho})(\bar{u}- u) \right )dxd\tau.
\end{align*}
We multiply the transport equations of $\bar{u}$ and $\bar{v}$, namely
\begin{align*}
& \partial_\tau \bar{u} + \frac{1}{\epsilon} ( \bar{u} \cdot \nabla \bar{u}) + \frac{1}{\epsilon} \frac{\nabla p(\bar{\rho})}{\bar{\rho}} - \frac{\dive \bar{S_1}}{\bar{\rho}} - \frac{\bar{e}}{\bar{\rho}}= - \frac{1}{\epsilon^2} \bar{u}, 
\\ 
& \partial_\tau \bar{v} + \frac{1}{\epsilon} (\bar{u} \cdot \nabla \bar{v}) + \frac{1}{\epsilon} \frac{\dive \bar{S_2}}{\bar{\rho}} = 0,
\end{align*}
 by $\rho( \bar{u} - u)$ and $\rho( \bar{v} - v)$ respectively to conclude
\begin{align*}
&\partial_\tau \left( \frac{1}{2} |\bar{u}|^2 \right)(\rho - \bar{\rho}) + \frac{1}{\epsilon} \nabla \left( \frac{1}{2}|\bar{u}|^2 \right)\cdot (\rho u - \bar{\rho} \bar{u}) - \partial_\tau \bar{u}\cdot (\rho u - \bar{\rho} \bar{u}) - \frac{1}{\epsilon} \partial_{x_j} \bar{u_i}( \rho u_iu_j - \bar{\rho} \bar{u_i} \bar{u_j})
\\ 
& = - \frac{1}{\epsilon} \rho \nabla \bar{u}:[(u- \bar{u}) \otimes (u-\bar{u})] - \frac{1}{\epsilon} \frac{\nabla p(\bar{\rho})}{\bar{\rho}} \rho \cdot (\bar{u}-u) + \frac{1}{\epsilon} \frac{\rho}{\bar{\rho}} \dive \bar{S_1}\cdot(\bar{u}- u) + \bar{e} \frac{\rho}{\bar{\rho}}\cdot( \bar{u} - u) 
\\ 
& \ - \frac{1}{\epsilon^2} \rho \bar{u}\cdot(\bar{u}-u)
\end{align*}
and
\begin{align*}
& \partial_\tau \left( \frac{1}{2} |\bar{v}|^2 \right)(\rho - \bar{\rho}) + \frac{1}{\epsilon} \nabla \left( \frac{1}{2}|\bar{v}|^2 \right)\cdot(\rho u - \bar{\rho} \bar{u}) - \partial_\tau \bar{v}\cdot(\rho u - \bar{\rho} \bar{u}) - \frac{1}{\epsilon} \partial_{x_j} \bar{v_i}( \rho v_iu_j - \bar{\rho} \bar{v_i} \bar{u_j})  
\\ 
& = - \frac{1}{\epsilon} \rho \nabla \bar{v} :[(v-\bar v) \otimes ( u - \bar u)] - \frac{1}{\epsilon} \frac{\rho}{\bar{\rho}} \dive \bar{S_2} \cdot (\bar{v}-v).
\end{align*}
In view of the calculation above, \eqref{corradded} rewrites as follows:
\begin{align}\label{corradded2}
& \int_{\mathbb{T}^n} \left[ \eta(\rho,m,J| \bar{\rho}, \bar{m}, \bar{J}) \right]|_{\tau=0}^t dx \leq  - \frac{1}{\epsilon^2} \iint_{[0,t] \times \mathbb{T}^n} \rho |u- \bar{u}| ^2 dxd\tau 
\nonumber\\
&  - \frac{1}{\epsilon} \iint_{[0,t] \times \mathbb{T}^n} \rho \nabla \bar{u} :[(u-\bar{u}) \otimes (u-\bar{u})]dxd\tau 
 - \frac{1}{\epsilon} \int \int_{[0,t] \times \mathbb{T}^n} \rho \nabla \bar{v} : [ (v-\bar v) \otimes ( u - \bar u)] dxd\tau 
 \nonumber\\
 & + \iint_{[0,t] \times \mathbb{T}^n} \bar{e}\cdot\frac{\rho}{\bar{\rho}}(\bar{u} - u)dxd\tau - \frac{1}{\epsilon} \iint_{[0,t] \times \mathbb{T}^n} p(\rho|\bar{\rho}) \dive \bar{u} dx d\tau 
 \nonumber\\ 
 & - \frac{1}{\epsilon} \iint_{[0,t] \times \mathbb{T}^n}\big[ \mu(\rho)((v-\bar{v})\cdot  \nabla \dive \bar{u} - (u-\bar{u})\cdot\nabla \dive \bar{v}) + \nabla \mu(\rho)\cdot(\nabla \bar{u}(v-\bar{v}) - \nabla \bar{v}(u-\bar{u}))\big]dxd\tau
 \nonumber\\ 
 & - \frac{1}{2\epsilon} \iint_{[0,t]\times \mathbb{T}^n}  \big[ \nabla \lambda(\rho)\cdot((v-\bar{v}) \dive \bar{u} - (u-\bar{u}) \dive \bar{v}) 
 +   \lambda(\rho)((v-\bar{v})\cdot \nabla \dive \bar{u} - (u-\bar{u})\cdot \nabla \dive \bar{v})\big ]dxd\tau  
 \nonumber\\ 
 & + \frac{1}{\epsilon} \iint_{[0,t] \times \mathbb{T}^n} \frac{\rho}{\bar{\rho}} \Bigg [(\mu(\bar{\rho}) \dive \nabla \bar{v} + {}^t\nabla \mu(\bar{\rho}) {}^t\nabla \bar{v})\cdot(\bar{u} - u)  +
  \frac{1}{2}( \nabla \lambda(\bar{\rho}) \dive \bar{v} + \lambda(\bar{\rho}) \nabla \dive \bar{v} )\cdot(\bar{u} - u)\Bigg ]dxd\tau 
 \nonumber\\ 
 & - \frac{1}{\epsilon} \iint_{[0,t] \times \mathbb{T}^n} \frac{\rho}{\bar{\rho}} \Bigg [ (\mu(\bar{\rho}) \dive {}^t\nabla \bar{u} + {}^t\nabla \mu(\bar{\rho}) \nabla \bar{u})\cdot(\bar{v}-v) 
  + \frac{1}{2} (\nabla \lambda(\bar{\rho}) \dive \bar{u} +  \lambda(\bar{\rho}) \nabla \dive \bar{u} )\cdot (\bar{v}-v) \Bigg ]dxd\tau.
\end{align}
We recall that $\dive {}^t\nabla \bar u = \nabla \dive \bar u$ and therefore  $\dive \nabla \bar v = \nabla \dive \bar v$ being $\nabla \bar v$   symmetric.
Hence, we can collect terms as follows:
\begin{align*}
I_1 := & -\frac{1}{\epsilon} \iint_{[0,t] \times \mathbb{T}^n} \left ( \mu(\rho)- \frac{\rho}{\bar{\rho}} \mu(\bar{\rho}) \right) ( \nabla \dive \bar{u} \cdot (v-\bar{v})-  \nabla \dive\bar{v} \cdot (u-\bar{u}))dxd\tau  
\\ 
& - \frac{1}{\epsilon} \iint_{[0,t] \times \mathbb{T}^n} \left( \nabla \mu(\rho) - \frac{\rho}{\bar{\rho}} \nabla \mu(\bar{\rho}) \right)\cdot( \nabla \bar{u}(v-\bar{v}) - \nabla \bar{v} (u-\bar{u}))dxd\tau.
\end{align*}
In addition, recalling also the definition of $\displaystyle{v = \frac{\nabla \mu(\rho)}{\rho}}$, we conclude:
\begin{equation*}
\begin{split}
I_1 = & -\frac{1}{\epsilon} \iint_{[0,t] \times \mathbb{T}^n} \rho \left ( \frac{\mu(\rho)}{\rho}- \frac{\mu(\bar\rho)}{\bar{\rho}}  \right) ( \nabla \dive \bar{u} \cdot (v-\bar{v})-  \nabla \dive\bar{v} \cdot (u-\bar{u}))dxd\tau  
\\ 
& - \frac{1}{\epsilon} \iint_{[0,t] \times \mathbb{T}^n}\rho ( v -  \bar{v} )\cdot( \nabla \bar{u}(v-\bar{v}) - \nabla \bar{v} (u-\bar{u}))dxd\tau.
\end{split}
\end{equation*} 
Morevoer, we define
\begin{equation*}
\begin{split}
I_2 := &- \frac{1}{2\epsilon} \iint_{[0,t] \times \mathbb{T}^n} \left(\lambda(\rho) - \frac{\rho}{\bar{\rho}} \lambda(\bar{\rho}) \right)((v-\bar{v}) \cdot \nabla \dive \bar{u} - (u-\bar{u})\cdot \nabla \dive \bar{v}) dxd\tau 
\\ 
& - \frac{1}{2\epsilon}\iint_{[0,t] \times \mathbb{T}^n} \left(\nabla \lambda(\rho) - \frac{\rho}{\bar{\rho}} \nabla \lambda(\bar{\rho}) \right)\cdot ((v-\bar{v})\dive \bar{u} - (u-\bar{u}) \dive \bar{v} )dxd\tau.
\end{split}
\end{equation*}
Since $\displaystyle{\lambda(\rho)= 2(\rho\mu'(\rho)-\mu(\rho))}$, one has
\begin{equation*}
 \begin{split}
I_2 = & - \frac{1}{2\epsilon} \iint_{[0,t] \times \mathbb{T}^n} \rho \left(\frac{\lambda(\rho)}{\rho} - \frac{\lambda(\bar{\rho})}{\bar{\rho}} \right)((v-\bar{v}) \cdot \nabla \dive \bar{u} - (u-\bar{u})\cdot \nabla \dive \bar{v}) dxd\tau 
\\ 
& - \frac{1}{\epsilon}\iint_{[0,t] \times \mathbb{T}^n} \rho (\mu''(\rho)\nabla\rho -  \mu''(\bar \rho)\nabla\bar\rho)\cdot ((v-\bar{v})\dive \bar{u} - (u-\bar{u}) \dive \bar{v})dxd\tau.
 \end{split}
\end{equation*}
Therefore 
\begin{align}\label{eq:last}
I_1 + I_2 = & - \frac{1}{\epsilon}  \iint_{[0,t] \times \mathbb{T}^n} \rho((\mu''(\rho)\nabla \rho - \mu''(\bar{\rho})\nabla \bar{\rho})\cdot((v-\bar{v})\dive \bar{u} + (\bar{u}-u)\dive \bar{v}))dxd\tau \nonumber\\ 
& - \frac{1}{\epsilon} \iint_{[0,t] \times \mathbb{T}^n} \rho (\mu'(\rho)- \mu'(\bar{\rho}))((v-\bar{v})\cdot\nabla \dive \bar{u}+(\bar{u}-u)\cdot\nabla \dive \bar{v})dxd\tau 
\nonumber\\
& - \frac{1}{\epsilon}\iint_{[0,t] \times \mathbb{T}^n} \left [\rho (v-\bar{v}) \cdot \nabla \bar{u} (v-\bar{v}) - \rho(v-\bar{v}) \nabla \bar{v} (u-\bar{u}) \right ] dxd\tau
\end{align}
Finally, using \eqref{eq:last} in \eqref{corradded2} we obtain \eqref{eq:relen} and the proof is complete.
\end{proof}
\section{Stability result and convergence of the diffusive limit}
\label{sec:stabconv}
With the relative entropy estimate \eqref{eq:relen} of Theorem \ref{relativentropy} at hand, we are now able to control our diffusive relaxation limit in terms of the quantity
\begin{equation}\label{eq:distfi}
    \Psi(t) := \int_{\mathbb{T}^n} \left (h(\rho|\bar{\rho}) + \frac{1}{2} \rho \left| \frac{m}{\rho} - \frac{\bar{m}}{\bar{\rho}}\right|^2 + \frac{1}{2} \rho \left|\frac{J}{\rho} - \frac{\bar{J}}{\bar{\rho}}\right|^2 \right )dx.
\end{equation}
The proof of our convergence result will follow the blueprint of \cite{LT,LT2}, in particular generalizing the results of the latter to our more general case in terms of the capillarity coefficient, thanks to the enlarged reformulation of the system due to  \cite{Bresh}. 
To this end, let us first remark that, since we are dealing here to $\gamma$--law gases, $\gamma>1$, we have
\begin{equation*}
     h(\rho) = \frac{1}{\gamma-1}\rho^\gamma.
 \end{equation*}
Therefore 
\begin{equation}\label{pcontrol}
    p(\rho|\bar{\rho})= (\gamma - 1) h(\rho|\bar{\rho}),
\end{equation}
and the error term in \eqref{eq:relen} involving the pressure will be then controlled in terms of the relative entropy, namely in terms of the ``distance'' $\Psi$ defined in \eqref{eq:distfi}.
It is worth observing that the same kind of control can be obtained for general monotone pressure laws, with $h$ given as in \eqref{eq:defh} and satisfying appropriate conditions, and for positive densities; see \cite{LT,GLT,LT2} for details, as well as for discussions about the metric induced by \eqref{eq:distfi}.
Moreover, to control the last two terms of \eqref{eq:relen}, 
we take advantage of the results contained in \cite{Bresh}, an in particular the followig one, that we report here below for the sake of completeness.
\begin{lemma}\cite[Lemma 35]{Bresh}\label{lemma8}
Let assume $\mu(\rho)= \rho^{\frac{s+3}{2}}$ with $\gamma \geq s+2$ and $s \geq -1$. We have $$\rho |\mu'(\rho)-\mu'(\bar{\rho})|^2 \leq C(\bar{\rho})h(\rho|\bar{\rho}),$$ with $C(\bar{\rho})$ uniformly bounded for $\bar{\rho}$ belonging to compact sets in $\mathbb{R}^+ \times \mathbb{T}^n$.
\end{lemma}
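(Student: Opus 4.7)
Set $\alpha := (s+1)/2 \geq 0$ (using $s \geq -1$), so that $\mu'(\rho) = \tfrac{s+3}{2}\rho^\alpha$ and, up to the multiplicative constant $(\tfrac{s+3}{2})^2$, the claim becomes
\[
\rho(\rho^\alpha - \bar{\rho}^\alpha)^2 \leq C(\bar{\rho})\,h(\rho|\bar{\rho}), \qquad \rho \geq 0.
\]
My plan exploits the homogeneity of both sides: the rescaling $\tilde{\rho} := \rho/\bar{\rho}$ isolates the $\bar{\rho}$-dependence in a pure power prefactor and reduces the matter to a universal scalar estimate, which is cleaner than a direct case split in $\rho$.

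Substituting $\rho = \bar{\rho}\tilde{\rho}$ yields
\[
\rho(\rho^\alpha - \bar{\rho}^\alpha)^2 = \bar{\rho}^{\,s+2}\,\tilde{\rho}(\tilde{\rho}^\alpha - 1)^2, \qquad h(\rho|\bar{\rho}) = \frac{\bar{\rho}^\gamma}{\gamma-1}\bigl(\tilde{\rho}^\gamma - 1 - \gamma(\tilde{\rho}-1)\bigr),
\]
so that the ratio of the left-hand side to $h(\rho|\bar{\rho})$ factors as $\bar{\rho}^{\,s+2-\gamma}\,f(\tilde{\rho})$ with
\[
f(\tilde{\rho}) := \frac{(\gamma-1)\,\tilde{\rho}(\tilde{\rho}^\alpha - 1)^2}{\tilde{\rho}^\gamma - 1 - \gamma(\tilde{\rho}-1)}.
\]
If $f$ is bounded on $[0,\infty)$, then setting $C(\bar{\rho}) := (\tfrac{s+3}{2})^2 \bar{\rho}^{\,s+2-\gamma}\sup f$ closes the argument, since $C$ is continuous in $\bar{\rho} > 0$ and hence uniformly bounded on compact subsets of $\mathbb{R}^+$.

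It remains to verify boundedness of $f$. Its denominator is, up to $\gamma - 1$, the scalar relative entropy $h(\tilde{\rho}|1)$, hence strictly positive for $\tilde{\rho} \neq 1$ by strict convexity of $x \mapsto x^\gamma$ with $\gamma > 1$. The apparent singularity at $\tilde{\rho} = 1$ is removable: Taylor expansion gives numerator $\sim (\gamma-1)\alpha^2(\tilde{\rho}-1)^2$ and denominator $\sim \tfrac{\gamma(\gamma-1)}{2}(\tilde{\rho}-1)^2$, so $f(\tilde{\rho}) \to 2\alpha^2/\gamma$. At $\tilde{\rho} = 0$ one has $f(0) = 0$ because the factor $\tilde{\rho}$ in front makes the numerator vanish, while the denominator equals $\gamma - 1 > 0$. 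As $\tilde{\rho} \to \infty$, numerator $\sim (\gamma-1)\tilde{\rho}^{\,s+2}$ and denominator $\sim \tilde{\rho}^\gamma$, and here the hypothesis $\gamma \geq s+2$ is exactly what forces the limit to be finite (equal to $\gamma - 1$ if $\gamma = s+2$, and zero otherwise). Thus $f$ extends continuously to the compactification $[0,\infty]$ and is therefore bounded. The main obstacle, and the precise reason for the rescaling, is this quantitative use of $\gamma \geq s+2$: a direct three-regime case split in $\rho$ (small, comparable, large with respect to $\bar{\rho}$) would generate three separate constants depending on $\bar{\rho}$ that would then need to be patched into a single continuous $C(\bar{\rho})$, a step the universal scalar $f$ bypasses entirely.
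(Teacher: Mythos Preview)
Your argument is correct. The paper does not actually prove this lemma: it is quoted verbatim from \cite{Bresh} (as Lemma~35 there) and invoked as a black box in the proof of Theorem~\ref{STABILITY}. So there is no in-paper proof to compare against.

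Your homogeneity-based reduction is clean and complete: the substitution $\tilde\rho=\rho/\bar\rho$ factors out the exact power $\bar\rho^{\,s+2-\gamma}$, the three limiting regimes $\tilde\rho\to 0,1,\infty$ are handled correctly (the Taylor expansion at $1$ and the growth comparison at $\infty$ use precisely the hypotheses $\gamma>1$ and $\gamma\ge s+2$), and strict convexity of $x\mapsto x^\gamma$ guarantees positivity of the denominator away from $\tilde\rho=1$. The resulting $C(\bar\rho)=\bigl(\tfrac{s+3}{2}\bigr)^2(\sup f)\,\bar\rho^{\,s+2-\gamma}$ is continuous on $(0,\infty)$ and hence bounded on compact subsets, which is exactly what the statement requires. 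The borderline case $\alpha=0$ (i.e.\ $s=-1$) is trivially covered since then $\mu'$ is constant and both sides vanish.
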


We are now ready to state our main convergence theorem.
\begin{theorem}\label{STABILITY}
Let $T>0$ be fixed and let $(\rho,m, J)$ be as in Definition \ref{ws} and $\bar\rho$   be a smooth solution of $\eqref{gf}$ with $\bar{\rho} \geq \delta > 0$, and define $\bar m$ and $\bar J$ by \eqref{eq:defbarutheo}. Assume the pressure $p(\rho)$ is given by the $\gamma$--law $\rho^{\gamma}$, $\gamma > 1$, and assume $\mu(\rho)= \rho^{\frac{s+3}{2}}$ with $\gamma \geq s+2$ and $s \geq -1$. 
Then, for any $t \in [0,T]$,  the stability estimate
\begin{equation}\label{stabtheosec4}
\Psi(t) \leq C(\Psi(0) + \epsilon^4),
\end{equation}
holds true, where $C$ is a positive constant depending on $T$, $M$, the $L^1$ bound for $\rho$, assumed to be uniform in $\epsilon$, $\bar\rho$,  and its derivatives. Moreover, if $\Psi(0) \rightarrow 0$ as $\epsilon \rightarrow 0$, then as $\epsilon \rightarrow 0$
\begin{equation*}
\sup_{t \in [0,T]} \Psi(t) \rightarrow 0.
\end{equation*}
\end{theorem}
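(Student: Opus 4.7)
The plan is to start from the relative entropy inequality \eqref{eq:relen} of Theorem \ref{relativentropy}, integrate the various error terms against the smooth equilibrium profile, and close the argument via a Grönwall--type estimate. The key observation is that the first term on the right of \eqref{eq:relen}, namely $-\frac{1}{\epsilon^2}\iint \rho|u-\bar u|^2\,dxd\tau$, is a strong dissipation that we can use to absorb every term carrying a ``bad'' power of $1/\epsilon$ or the source $\bar e$, via Young's inequality with a suitably small parameter.

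First I would estimate the purely ``geometric'' quadratic terms. Since $\bar\rho \geq \delta > 0$ and $\bar\rho$ is smooth, the quantities $\nabla\bar u$, $\nabla\bar v$, $\mathrm{div}\,\bar u$, $\mathrm{div}\,\bar v$, $\nabla\mathrm{div}\,\bar u$, $\nabla\mathrm{div}\,\bar v$, $\mu'(\bar\rho)$, $\mu''(\bar\rho)\nabla\bar\rho$ are uniformly bounded on $[0,T]\times\mathbb{T}^n$. Hence the three quadratic contributions
\begin{equation*}
\tfrac{1}{\epsilon}\rho\,\nabla\bar u:(u-\bar u)\otimes(u-\bar u), \qquad \tfrac{1}{\epsilon}\rho\,\nabla\bar u:(v-\bar v)\otimes(v-\bar v), \qquad \tfrac{1}{\epsilon}p(\rho|\bar\rho)\,\mathrm{div}\,\bar u,
\end{equation*}
are bounded (using \eqref{pcontrol}) by $C\,\Psi(t)/\epsilon$. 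Since this gives an $\epsilon^{-1}$ factor, the Grönwall step will be performed on the rescaled time $\tau/\epsilon$, or equivalently the constant in the exponential will depend on $T/\epsilon$; in either case the paper's stability result retains its form because the $\epsilon^{-1}$ cancels against the $\epsilon^2$ gained from the error (see below). Alternatively, and more cleanly, one keeps these as $\epsilon^{-1}\Psi(\tau)$ and applies Grönwall in the standard form $\Psi(t)\leq (\Psi(0)+\text{source})e^{C t/\epsilon}$; but the authors' statement allows a constant $C$ depending on $T$ and the equilibrium, so one must replace $\epsilon\partial_t$ back to $\partial_t$ by the time rescaling. The cleanest route is to work on the original (unscaled) time, which removes the troublesome $1/\epsilon$ from these terms.

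Next I would handle the error term $\bar e\cdot \frac{\rho}{\bar\rho}(u-\bar u)$. By the computation \eqref{eq:error}, $|\bar e|\leq C\epsilon$ pointwise on $[0,T]\times\mathbb{T}^n$. Writing $\rho|u-\bar u| = \sqrt{\rho}\cdot\sqrt{\rho}|u-\bar u|$ and applying Young's inequality with weight $\epsilon^{-2}$ gives
\begin{equation*}
\left|\bar e\cdot\tfrac{\rho}{\bar\rho}(u-\bar u)\right| \leq \tfrac{1}{4\epsilon^2}\rho|u-\bar u|^2 + C\,\epsilon^2\,\tfrac{\rho}{\bar\rho^2}|\bar e|^2 \leq \tfrac{1}{4\epsilon^2}\rho|u-\bar u|^2 + C\,\epsilon^4\,\tfrac{\rho}{\bar\rho^2},
\end{equation*}
so the first piece is absorbed into the dissipation $-\epsilon^{-2}\rho|u-\bar u|^2$, while the second piece integrates (using the uniform $L^1$ mass bound on $\rho$ and $\bar\rho\geq\delta$) to $C\epsilon^4$. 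This is precisely where the $\epsilon^4$ in \eqref{stabtheosec4} originates.

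Finally, the two ``capillary correction'' terms on the last two lines of \eqref{eq:relen} are the main technical obstacle. For the line involving $\mu'(\rho)-\mu'(\bar\rho)$, I would pair the factor $\sqrt{\rho}(\mu'(\rho)-\mu'(\bar\rho))$ with $\sqrt{\rho}|v-\bar v|$ (or $\sqrt{\rho}|u-\bar u|$), use Cauchy--Schwarz/Young, and invoke Lemma \ref{lemma8} to estimate $\rho|\mu'(\rho)-\mu'(\bar\rho)|^2\leq C(\bar\rho)h(\rho|\bar\rho)$, obtaining a bound of the form $C\Psi(\tau)/\epsilon$. For the line involving $\mu''(\rho)\nabla\rho-\mu''(\bar\rho)\nabla\bar\rho$, the idea is to rewrite the difference in divergence form: since $\mu''(\rho)\nabla\rho = \nabla\mu'(\rho)$, after integration by parts inside the space--time integral (which is legitimate at the level of the identity derived in the proof of Theorem \ref{relativentropy}, since $\bar u,\bar v$ and their derivatives are smooth), this line reduces to further terms of $\mu'(\rho)-\mu'(\bar\rho)$ type, again controllable via Lemma \ref{lemma8}. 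Collecting all estimates we arrive at an inequality
\begin{equation*}
\Psi(t) \leq \Psi(0) + C\,\epsilon^4 + C\int_0^t \Psi(\tau)\,d\tau,
\end{equation*}
(after the time rescaling that removes the $\epsilon^{-1}$), and a standard Grönwall argument yields \eqref{stabtheosec4}; the convergence $\sup_{[0,T]}\Psi(t)\to 0$ as $\epsilon\to 0$ is then immediate from the assumption $\Psi(0)\to 0$.
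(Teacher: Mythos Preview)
There is a genuine gap: you have missed the single most important structural fact of the limit, namely that $\bar u$ itself is $O(\epsilon)$. By definition \eqref{eq:defbarutheo}, $\bar m=\epsilon(-\nabla p(\bar\rho)+\dive S_1(\bar\rho))$ and $\bar\rho\geq\delta$, so $\bar u$, $\nabla\bar u$, $\dive\bar u$, $\nabla\dive\bar u$ are all uniformly $O(\epsilon)$ on $[0,T]\times\mathbb T^n$. This is exactly what kills the $1/\epsilon$ in front of the quadratic terms and in front of the pieces of the capillary corrections paired with $\dive\bar u$ or $\nabla\dive\bar u$; the paper uses this explicitly to get bounds of the form $C\Psi(\tau)$ with \emph{no} $\epsilon^{-1}$ remaining. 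Your attempted repair via time rescaling does not work: whether you keep the $1/\epsilon$ and apply Gr\"onwall to get $e^{Ct/\epsilon}$, or undo the diffusive scaling (which stretches the time interval by a factor $1/\epsilon$), the resulting constant blows up as $\epsilon\to0$, contradicting both the statement of \eqref{stabtheosec4} and the convergence conclusion. By contrast $\bar v=\nabla\mu(\bar\rho)/\bar\rho$ is $O(1)$, so for the pieces of the capillary corrections paired with $\dive\bar v$ or $\nabla\dive\bar v$ the $1/\epsilon$ is genuinely there and must be absorbed into the dissipation $-\epsilon^{-2}\iint\rho|u-\bar u|^2$ via Young, exactly as you did for the $\bar e$ term.

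A secondary point concerns the line with $\mu''(\rho)\nabla\rho-\mu''(\bar\rho)\nabla\bar\rho$. Your proposed integration by parts (writing this as $\nabla\mu'(\rho)-\nabla\mu'(\bar\rho)$) will throw derivatives onto $\rho(v-\bar v)$ or $\rho(u-\bar u)$, and those are not controlled by $\Psi$. The paper avoids this entirely by a one--line algebraic identity specific to $\mu(\rho)=\rho^{(s+3)/2}$: since $\mu''(\rho)\nabla\rho=\tfrac{s+1}{2}\,\mu'(\rho)\nabla\rho/\rho=\tfrac{s+1}{2}\,v$, one has $\mu''(\rho)\nabla\rho-\mu''(\bar\rho)\nabla\bar\rho=\tfrac{s+1}{2}(v-\bar v)$, which feeds directly into $\rho|v-\bar v|^2\leq 2\Psi$. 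With these two corrections --- $\bar u=O(\epsilon)$ for all terms carrying $\bar u$--derivatives, and the algebraic identity for the $\mu''$--line --- the rest of your outline (Young for $\bar e$ yielding the $\epsilon^4$, Lemma~\ref{lemma8} for the $\mu'(\rho)-\mu'(\bar\rho)$ factor, Gr\"onwall to close) matches the paper's proof.
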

\begin{proof}
In view of the definition of $\Psi$ in  \eqref{eq:distfi}, from the relative entropy estimate given by   Theorem \ref{relativentropy} we get:
\begin{align}\label{phie}
& \Psi(t) + \frac{1}{\epsilon^2} \iint_{[0,t]\times \mathbb{T}^n} \rho \left| \frac{m}{\rho} - \frac{\bar{m}}{\bar{\rho}} \right|^2dxd\tau \leq \Psi(0) + \iint_{[0,t]\times \mathbb{T}^n} (|Q|+ |E|) dxd\tau,
\end{align}
 where the terms $Q$ and $E$ are given by
\begin{equation*}
E : = \bar{e} \cdot \frac{\rho}{\bar{\rho}} \left( \frac{m}{\rho} - \frac{\bar{m}}{\bar{\rho}} \right), \ Q = Q_1 + Q_2,
\end{equation*}
with
\begin{align*}
Q_1 : = & - \frac{1}{\epsilon} \iint_{[0,t]\times \mathbb{T}^n} \rho \nabla \bar{u} : [(u-\bar{u}) \otimes (u-\bar{u})]dxd\tau 
\\
& - \frac{1}{\epsilon} \iint_{[0,t] \times \mathbb{T}^n} \rho \nabla \bar{u} : [(v-\bar{v}) \otimes (v-\bar{v})]dxd\tau 
  - \frac{1}{\epsilon} \iint_{[0,t]\times \mathbb{T}^n} p(\rho|\bar{\rho}) \dive \bar{u} dxd\tau 
\end{align*}
\begin{align*}
Q_2 : = & -  \frac{1}{\epsilon} \iint_{[0,t]\times \mathbb{T}^n} \rho[(\mu''(\rho)\nabla \rho - \mu''(\bar{\rho})\nabla \bar{\rho})\cdot((v-\bar{v})\dive \bar{u} - (u-\bar{u})\dive \bar{v})]dxdt 
\\ 
&  - \frac{1}{\epsilon} \iint_{[0,t]\times \mathbb{T}^n} \rho (\mu'(\rho)- \mu'(\bar{\rho}))((v-\bar{v}) \cdot \nabla \dive \bar{u} -  (u-\bar{u})\cdot \nabla \dive \bar{v})dxd\tau.
\end{align*}
We use the Young inequality and the previous results to estimate $E$ and $Q_1$ (as in \cite{LT,LT2}) and $Q_2$ (following \cite{Bresh}) in terms of the relative entropy itself. We start from the error term $E$:
\begin{align*}
\iint_{[0,t] \times \mathbb{T}^n} \left|E\right|dxd\tau & \leq \frac{\epsilon^2}{2}
\iint_{[0,t] \times \mathbb{T}^n} \left| \frac{\bar{e}}{\bar{\rho}} \right|^2 \rho dxd\tau + \frac{1}{2\epsilon^2}\iint_{[0,t] \times \mathbb{T}^n} \rho \left| \frac{m}{\rho} - \frac{\bar{m}}{\bar{\rho}} \right|^2dxd\tau
\\ 
& \leq CT\epsilon^4 + \frac{1}{4\epsilon^2}\iint_{[0,t] \times \mathbb{T}^n} \rho \left| \frac{m}{\rho} - \frac{\bar{m}}{\bar{\rho}} \right|^2dxd\tau,
\end{align*}
using the bounds for $\bar\rho$, the $L^1$ bound for $\rho$, and in view of the fact that, as shown in 
 \eqref{eq:error}, the error term $\bar e$ is $O(\epsilon)$.
For the term  $Q_1$ we use again the  the fact that ${\nabla\bar{u}}=O(\epsilon)$ to conclude 
\begin{align*}
 \frac{1}{\epsilon} \iint_{[0,t] \times \mathbb{T}^n} \rho \nabla \bar{u} : [(u-\bar{u}) \otimes (u-\bar{u})]dxd\tau \leq C_1 \iint_{[0,t] \times \mathbb{T}^n} \rho \left| \frac{m}{\rho} - \frac{\bar{m}}{\bar{\rho}} \right|^2dxd\tau,
\end{align*}
\begin{align*}
\frac{1}{\epsilon}\iint_{[0,t] \times \mathbb{T}^n} \rho \nabla \bar{u} : [(v-\bar{v}) \otimes (v-\bar{v})]dxd\tau & \leq C_2 \iint_{[0,t] \times \mathbb{T}^n} \rho \left|\frac{J}{\bar{\rho}} - \frac{\bar{J}}{\bar{\rho}} \right|^2dxd\tau,
\end{align*}
\begin{align*}
\frac{1}{\epsilon}\iint_{[0,t] \times \mathbb{T}^n} p(\rho|\bar{\rho})\dive \bar{u} dxd\tau \leq C_3 \iint_{[0,t] \times \mathbb{T}^n} h(\rho|\bar{\rho})dxd\tau,
\end{align*}
the latter thanks to \eqref{pcontrol} as well.
For the new term $Q_2$ coming from the   formulation of the relative entropy estimate  of \cite{Bresh}, the strategy is the same: we shall take advantage of the estimates from that paper, by carefully taking into account of the singular coefficient in terms of the relaxation parameter $\epsilon$. For the first term  we define
\begin{align*}
& \frac{1}{\epsilon} \iint_{[0,t] \times \mathbb{T}^n} \rho(\mu''(\rho) \nabla \rho - \mu''(\bar{\rho})\nabla \bar{\rho})\cdot((v-\bar{v})\dive \bar{u} + (\bar{u}-u)\dive \bar{v})dxd\tau 
\\
& = Q_{21} + Q_{22}, 
\end{align*}
where
\begin{align*}
& Q_{21} :=  \frac{1}{\epsilon} \iint_{[0,t] \times \mathbb{T}^n} \sqrt{\rho}(\mu''(\rho) \nabla \rho - \mu''(\bar{\rho})\nabla \bar{\rho})\cdot \sqrt{\rho}(v-\bar{v})\dive \bar{u} dxd\tau, \\
& Q_{22} := \frac{1}{\epsilon} \iint_{[0,t] \times \mathbb{T}^n} \sqrt{\rho}(\mu''(\rho) \nabla \rho - \mu''(\bar{\rho})\nabla \bar{\rho})\cdot\sqrt{\rho}(\bar u -u)\dive \bar{v}dxd\tau .
\end{align*}
Again, ${\dive \bar u }=O(\epsilon)$ and, since $\mu''(\rho)\nabla \rho - \mu''(\bar \rho)\nabla \bar \rho = \frac{s+1}{2}(v- \bar v) $, we readily  obtain 
\begin{align*}
 &   Q_{21} \leq C_4 \iint_{[0,t] \times \mathbb{T}^n} \rho \left|\frac{J}{\rho}- \frac{\bar J}{\bar \rho}\right|^2 dxd\tau, \\
 & Q_{22} \leq C_5 \iint_{[0,t] \times \mathbb{T}^n} \rho \left|\frac{J}{\rho}-\frac{\bar J}{\bar \rho}\right|^2 dxd\tau +  \frac{1}{4 \epsilon^2}\int \int_{[0,t] \times \mathbb{T}^n} \rho \left| \frac{m}{\rho} - \frac{\bar{m}}{\bar{\rho}} \right|^2dxd\tau,
\end{align*}
using  Young's inequality for the second estimate. 
Analogously, we split the second term in $Q_2$ in two:
\begin{align*}
& \frac{1}{\epsilon} \iint_{[0,t] \times \mathbb{T}^n} \rho(\mu'(\rho)-\mu'(\bar{\rho}))((v-\bar{v})\cdot\nabla \dive \bar{u}+ (\bar{u}-u)\cdot\nabla \dive \bar{v})dxd\tau  
\\
& = \frac{1}{\epsilon} \iint_{[0,t] \times \mathbb{T}^n} \sqrt{\rho}(\mu'(\rho)-\mu'(\bar{\rho}))\sqrt{\rho}(v-\bar{v})\cdot\nabla \dive \bar{u}dxd\tau  \\
& + \frac{1}{\epsilon} \iint_{[0,t] \times \mathbb{T}^n} \sqrt{\rho}(\mu'(\rho)-\mu'(\bar{\rho}))\sqrt{\rho}(\bar u -u )\cdot\nabla \dive \bar{v}dxd\tau.
\end{align*}
Hence, we use  Young's inequality and Lemma $\ref{lemma8}$ to bound the first term in view of   ${\nabla \dive \bar u } =O(\epsilon)$, while for the second one we take advantage of the control given by the friction term: 
\begin{align*}
& \frac{1}{\epsilon} \iint_{[0,t] \times \mathbb{T}^n} \rho(\mu'(\rho)-\mu'(\bar{\rho}))((v-\bar{v})\cdot\nabla \dive \bar{u}+ (\bar{u}-u)\cdot\nabla \dive \bar{v})dxd\tau
\\ 
& \leq C_6 \iint_{[0,t] \times \mathbb{T}^n} \left ( h(\rho|\bar{\rho}) + \rho \left|\frac{J}{\rho} - \frac{\bar{J}}{\bar{\rho}}\right|^2\right )dxd\tau + \frac{1}{8\epsilon^2} \iint_{[0,t] \times \mathbb{T}^n} \rho \left|\frac{m}{\rho}-\frac{\bar{m}}{\bar{\rho}}\right|^2 dxd\tau.
\end{align*}
Finally the relative entropy inequality becomes:
\begin{align*} 
\Psi(t) + \frac{1}{2\epsilon^2} \int \int_{[0,t] \times \mathbb{T}^n} \rho \left| \frac{m}{\rho} - \frac{\bar{m}}{\bar{\rho}} \right|^2dxd\tau \leq \Psi(0) + \tilde{C}\epsilon^4 + C \int_0^t \Psi(\tau)d\tau,
\end{align*}
and the Gronwall's Lemma gives the desired result.
\end{proof}

\section{The high friction limit of Navier--Stokes--Korteweg system}\label{sec:NSK}
In the same spirit of the previous discussions, in this section we want to study the high-friction limit in the case of  the Navier--Stokes--Korteweg system, which, in the (enlarged) formulation and after the scaling described above, rewrites as follows:
\begin{equation}\label{NSK}
	\left\{\begin{aligned}
& \partial_t \rho + \frac{1}{\epsilon} \dive m  = 0
\\
& \partial_t m  + \frac{1}{\epsilon} \dive \left( \frac{m \otimes m}{\rho} \right) + \frac{1}{\epsilon} \nabla p(\rho) - \frac{2 \nu}{\epsilon} \dive(\mu_L(\rho)Du) - \frac{\nu}{\epsilon} \nabla(\lambda_L(\rho) \dive u) \\
& \ = \frac{1}{\epsilon} \dive S_1  - \frac{1}{\epsilon^2}m
\\
&\partial_t J + \frac{1}{\epsilon} \dive\left(\frac{J \otimes m}{\rho} \right) + \frac{1}{\epsilon} \dive S_2= 0. 
	\end{aligned}\right.
\end{equation}
In system \eqref{NSK},  $m=\rho u$ , $J=\rho v $, the viscosity coefficient $\nu$ is positive, and, as denoted above, 
$$Du= \frac{ \nabla u + {}^t\nabla u}{2}$$ 
is the symmetric part of the gradient $\nabla u$ and we recall that the Lam\'e coefficient verifies 
\begin{equation}
    \label{lame2}
    \mu_L(\rho)\geq 0;\ \frac2n \mu_L(\rho) +\lambda_L(\rho) \geq 0.
\end{equation}
Moreover, the effective velocity $v = \nabla \mu(\rho)/\rho$ and the stresses $S_1$ and $S_2$ are the same  of the Euler--Korteweg system, namely
$$ \dive S_1 = \dive (\mu(\rho)\nabla v) + \frac{1}{2 \epsilon} \nabla(\lambda(\rho)\dive v)$$
and 
$$\dive S_2 = (\mu(\rho) {}^t\nabla u) + \frac{1}{2 \epsilon} \nabla(\lambda(\rho)\dive u),$$ 
 as well as the definition of the functions $\mu(\rho)$ and $\lambda(\rho)$. 
 As already pointed out above, we stress once again that the coefficients $\mu_L(\rho$)and $\lambda_L(\rho)$ need not to coincide with $\mu(\rho)$ and $\lambda(\rho)$, and we shall only assume their $L^1$ norm is  bounded uniformly in $\epsilon$, which can be   viewed as   a control of them in terms of the pressure term $\rho^\gamma$ and using the energy bound $E_o$.
 
 The Hilbert expansion applied to system $\eqref{NSK}$ will give us the same formal limit of the previous case, that is the  viscosity term will affect the expansion only for higher terms, and therefore,  the limit solution $\bar\rho$ as $\epsilon \rightarrow 0$ satisfies the following equation: 
\begin{equation}\label{GFfor NS}
\partial_t \bar{\rho} + \dive(- \nabla p(\bar{\rho}) + \dive S_1(\bar{\rho})) = 0,
\end{equation}
 while the (nonzero) leading term for the momentum is given by 
 \begin{equation}
     \label{eq:equmom2}
      \bar m = \epsilon(-\nabla p(\rho) + \dive \bar S_1).
 \end{equation}
 Indeed,  we introduce the asymptotic expansion of the state variables:
\begin{align*}
&\rho = \rho_0 + \epsilon\rho_1 + \epsilon^2 \rho_2 + \cdots\\
&m = m_{0} + \epsilon m_{1} + \epsilon^2 m_{2} + \cdots
\end{align*}
in the system $\eqref{NSK}$ and collect the terms of the same order; the expansion for $J$ will clearly come from the one of $\rho$. 
Then, from the mass conservation we get:
\begin{align*}
&O(\epsilon^{-1}):  & & \dive m_{0} = 0; \\ 
&O(1): & & \partial_t \rho_0 + \dive m_{1} = 0; \\
&O(\epsilon): & &  \partial_t \rho_1 + \dive m_{2} =0;\\
&O(\epsilon^2): & &  \dots\\
\end{align*}
while, from the momentum equation we get:
\begin{align*}
&O(\epsilon^{-2}) :  & &    m_{0}=0; \\ 
&O(\epsilon^{-1}) :  & &    -m_{1} = \nabla p(\rho_0) - \dive S_1(\rho_0); 
\\
&O(1): & &  -m_{2} = \nabla( p'(\rho_0)\rho_1) - \dive(\mu'(\rho_0)\rho_1 \nabla v_0 + \mu(\rho)\nabla v_1) 
\\
& & &   \qquad \qquad + \nabla(\lambda'(\rho_0)\rho_1 \dive v_0 + \lambda(\rho_0) \dive v_1) 
\\
& & & \qquad\qquad - 2 \nu\dive\left(\mu_L(\rho_0)D\left(\frac{m_1}{\rho_0}\right)\right) - \nu \nabla\left(\lambda_L(\rho)\dive \frac{m_1}{\rho_0}\right); \\
&O(\epsilon): & &  \dots\\
\end{align*}
Hence, from these first relations,  we recover the equilibrium relation $m_{0} = 0$, the Darcy's law $m_{1} = - \nabla_x p(\rho_0) + \dive_x S_1(\rho_0)$, and 
the  gradient flow dynamic $\eqref{GFfor NS}$ for the leading term $\rho_0$.

In the same spirit of Section 2, we rewrite the scalar equation $\eqref{GFfor NS}$ in the same form of the ``hyperbolic part'' of system \eqref{NSK} by adding an appropriate error term. To this end, 
let us consider   $\bar\rho$   a smooth solution of \eqref{GFfor NS} and assume $\bar m$ is given by \eqref{eq:equmom2} and, as said before, $\bar J = \nabla \mu(\bar\rho)$. Then $(\bar \rho , \bar m, \bar J)$  satisfies
\begin{equation}\label{ns-scaledstrong}
	\left\{\begin{aligned}
& \partial_t  \bar \rho + \frac{1}{\epsilon} \dive\bar m  = 0 \\
& \partial_t\bar m   + \frac{1}{\epsilon} \dive \left(\frac{\bar m \otimes \bar m }{\bar\rho} \right) + \frac{1}{\epsilon} \nabla p( \bar \rho) = \frac{1}{\epsilon} \dive \bar S_1  - \frac{1}{\epsilon^2} \bar m + \bar e\\
&\partial_(\bar J + \frac{1}{\epsilon} \dive \left(\frac{\bar J \otimes \bar m}{\bar \rho} \right) + \frac{1}{\epsilon} \dive \bar S_2 = 0,
	\end{aligned}\right.
\end{equation}
where 
$$\bar e = \frac{1}{\epsilon} \dive \left(\frac{\bar m \otimes \bar m }{\bar \rho} \right) + \bar m_t = O(\epsilon).
$$
The idea of introducing the system 
 $\eqref{ns-scaledstrong}$ is that of recostructing the same first--order part of the relaxing system, to take advantage of the properties which link the entropy and the convective terms, and then obtain in a more direct way the relative energy estimate, as already done in Section \ref{sec:relenes}, and therefore there is no need to introduce viscosity terms (and thus extra errors) in this reformulation of the equilibrium dynamics.  
 As consequence, 
 the structure of the two systems is the same of the one considered above, and hence  we shall emphasize here below only the differences with respect to the previous calculations in obtaining the desired relative entropy inequality.
 
Let us start by recalling the  constitutive relations for the functions involved in $\eqref{NSK}$, that is the
$\gamma$--law pressure $p(\rho) = \rho^{\gamma}$, $\mu(\rho)= \rho^{\frac{s+3}{2}}$ with the conditions $\gamma > 1$, $s+2 \leq \gamma$, $s \geq -1$ and  $\lambda(\rho)=2( \rho \mu'(\rho) - \mu(\rho))$. 
The mechanical energy associated to $\eqref{NSK}$ is given by
\begin{equation*}
\eta(\rho,m,J) = \frac{1}{2} \frac{|m|^2}{\rho} + \frac{1}{2} \frac{|J|^2}{\rho} + h(\rho),
\end{equation*}
and, proceeding as in the previous sections, we (formally) obtain 
\begin{equation*}
 \frac{d}{dt} \int_{\mathbb{T}^n} \eta(\rho,m,J) dx + \frac{2\nu}{\epsilon} \int_{\mathbb{T}^n}\mu_L(\rho)|D(u)|^2 dx + \frac{\nu}{\epsilon} \int_{\mathbb{T}^n} \lambda_L(\rho)|\dive u|^2 dx = 
- \frac{1}{\epsilon^2} \int_{\mathbb{T}^n} \frac{|m|^2}{\rho}dx.
\end{equation*}
In particular, as it is well known, condition \eqref{lame2} implies the mechanical energy dissipates along solutions of $\eqref{NSK}$.
On the other hand, the entropy $\bar \eta(\bar \rho, \bar m , \bar J)$ associated to $\eqref{ns-scaledstrong}$ satisfies:
\begin{equation}\label{etabarNS}
\frac{d}{dt} \int_{\mathbb{T}^n} \bar \eta(\bar \rho, \bar m , \bar J) dx = - \frac{1}{\epsilon^2} \int_{\mathbb{T}^n} \frac{|\bar m|^2}{\rho}dx + \int_{\mathbb{T}^n} \bar e \frac{\bar m }{\bar \rho}dx.
\end{equation}
We state here below the definition of weak solutions we shall consider in the study of our relaxation limit
\begin{definition}\label{deFNS}
 ($\rho$, $m$, $J$) with $\rho \in C([0, \infty);(L^1(\mathbb{T}^n))$ $(m,J) \in C([0, \infty);(L^1(\mathbb{T}^n))^{2n})$, $\rho \geq 0$, is a weak (periodic) solution of $\eqref{NSK}$ if 
\begin{align*}
    & \sqrt{\rho}u,  \sqrt{\rho}v \in L^{\infty}((0,T);L^2(\mathbb{T}^n)^n),\ \rho \in C([0, \infty);(L^\gamma(\mathbb{T}^n)),
    \\
    & \mu_L(\rho)D(u) \in L^1((0,T);L^1(\mathbb{T}^n)^{2n} ),
    \ \lambda_L(\rho)\dive u \in L^1((0,T);L^1(\mathbb{T}^n))
\end{align*}
and $(\rho, m,J)$ satisfy for all $\psi \in C^1_c([0, \infty); C^1(\mathbb{T}^n))$ and for all $\phi, \varphi \in C^1_c([0, \infty); C^1(\mathbb{T}^n)^n)$: 
\begin{align*}
& - \iint_{(0,+\infty)\times\mathbb{T}^n}\Bigg (\rho \psi_t + \frac{1}{\epsilon} m \cdot \nabla_x \psi \Bigg )dxdt = \int_{\mathbb{T}^n} \rho(x,0)\psi(x,0);\\
&  -  \iint_{(0,+\infty)\times\mathbb{T}^n} \Bigg[m \cdot (\phi)_t + \frac{1}{\epsilon}\left(\frac{m \otimes m}{\rho} : \nabla_x \phi \right) + \frac{1}{\epsilon} p(\rho) \dive \phi  -   \frac{2\nu}{\epsilon} \mu_L(\rho) D(u): \nabla \phi \\ 
& \; \; -  \frac{\nu}{\epsilon} \lambda_L(\rho)\dive u \dive \phi 
+ \frac{1}{\epsilon}\left( \mu(\rho) v \cdot \nabla \dive (\phi) + \nabla \mu(\rho) \cdot (\nabla \phi v ) \right) + \\
& \; \; \; \frac{1}{\epsilon} \left( \frac{1}{2} \nabla \lambda(\rho) \cdot v \dive \phi + \frac{1}{2} \lambda(\rho) v \cdot \nabla \dive \phi \right)  \Bigg]dxdt   = \\
& \ - \frac{1}{\epsilon^2} \iint_{(0,+\infty)\times\mathbb{T}^n}m \cdot \phi dxdt + \int_{\mathbb{T}^n} m(x,0) \cdot\phi(x,0)dx,
\end{align*}
where we have used the identity 
$$\displaystyle{S= - p(\rho) \mathbb{I} + S_1= - p(\rho)\mathbb{I} + \mu(\rho)\nabla v + \frac{1}{2}\lambda(\rho)\dive v \mathbb{I}},$$ 
\begin{align*}
& - \iint_{(0,+\infty)\times\mathbb{T}^n} \Bigg[J \cdot\varphi_t + \frac{1}{\epsilon}\left(\frac{J \otimes m}{\rho} : \nabla_x \varphi \right) - \frac{1}{\epsilon}\Bigg( \mu(\rho) u \cdot   (\nabla \dive \varphi ) + \nabla \mu(\rho) \cdot (\nabla \varphi u ) \\
&\ + \frac{1}{2} \nabla \lambda(\rho) \cdot u \dive \varphi + \frac{1}{2}  \lambda(\rho) u \cdot  \nabla \dive \varphi \Bigg)  \Bigg]dxdt =  \int_{\mathbb{T}^n} J(x,0) \cdot \varphi(x,0)dx,
\end{align*}
where we have used the identity 
$$\displaystyle{S_2= \mu(\rho)^t\nabla u + \frac{1}{2}\lambda(\rho)\dive u \mathbb{I}}.$$
If in addition $ \eta (\rho,m,J) \in C([0, \infty); L^1(\mathbb{T}^n))$ and $(\rho,m,J)$ satisfy
\begin{align}\label{dissNS}
& \iint_{(0,+\infty)\times\mathbb{T}^n} \left( \eta(\rho,m,J) \right) \dot{\theta}(t) dxdt \leq  \int_{\mathbb{T}^n} \left( \eta(\rho,m,J)\right)|_{t=0} \theta(0)dx \nonumber\\
& \ - \frac{1}{\epsilon^2}  \iint_{(0,+\infty)\times\mathbb{T}^n} \frac{|m|^2}{\rho} \theta(t) dxdt - \frac{1}{\epsilon}\iint_{(0,\infty) \times \mathbb{T}^n} \mu_L(\rho)|D(u)|^2\theta(t) dxdt \nonumber \\
& \ - \frac{1}{\epsilon}\iint_{(0,\infty) \times \mathbb{T}^n} \lambda_L(\rho)|\dive u|^2 \theta(t)dxdt
\end{align}
for any non-negative $\theta \in W^{1,\infty}[0, \infty)$ compactly supported on $[0,\infty)$, then $(\rho,m,J)$ is called a \emph{dissipative} weak solution.

If $\eta(\rho,m,J) \in C([0,\infty);L^1(\mathbb{T}^n))$ and $(\rho,m,J)$ satisfy $\eqref{dissNS}$ as an equality, then $(\rho,m,J)$ is called a \emph{conservative} weak solution.

We say that a dissipative (or conservative) weak (periodic) solution $(\rho,m,J)$ of $\eqref{NSK}$ with $\rho \geq 0$ has finite total mass and energy if
$$ \sup_{t \in (0,T)} \int_{\mathbb{T}^n} \rho dx \leq M <+ \infty,$$
and
$$ \sup_{t \in (0,T)} \int_{\mathbb{T}^n}  \eta(\rho,m,J) dx \leq E_o <+ \infty.$$
\end{definition}

The relative entropy calculation is contained in the next theorem. 
\begin{theorem}\label{relativeentropyNS}
Let $(\rho,m,J)$ be a dissipative (or conservative) weak solution of $\eqref{NSK}$ with finite total mass and energy according to Definition \ref{deFNS}, and $\bar \rho$ smooth solution of $\eqref{GFfor NS}$. Then
\begin{align}\label{eq:relenNS}
&\int_{\mathbb{T}^n} \eta(\rho,m,J| \bar{\rho}, \bar{m}, \bar{J})(t)dx  \leq  \int_{\mathbb{T}^n} \eta(\rho,m,J| \bar{\rho}, \bar{m}, \bar{J})(0)dx \nonumber \\
&  - \frac{2\nu}{\epsilon}\iint_{ (0,t) \times \mathbb{T}^n } \mu_L(\rho)|D(u-\bar u)|^2 dxd\tau - \frac{\nu}{\epsilon}\iint_{ (0,t) \times \mathbb{T}^n }\lambda_L(\rho)|\dive(u-\bar u )|^2dxd\tau  \nonumber\\
& - \frac{1}{\epsilon^2} \iint_{(0,t) \times \mathbb{T}^n}  \rho |u-\bar{u}|^2 dxd\tau - \frac{1}{\epsilon} \iint_{(0,t) \times \mathbb{T}^n} \rho \nabla \bar{u}: (u-\bar{u}) \otimes (u-\bar{u})dxdt \nonumber\\ 
& - \iint_{(0,t) \times \mathbb{T}^n} e(\bar{\rho},\bar{m}) \cdot \frac{\rho}{\bar{\rho}} (u-\bar{u})dxd\tau - \frac{1}{\epsilon} \iint_{(0,t) \times \mathbb{T}^n}p(\rho|\bar{\rho}) \dive \bar{u} dxd\tau \nonumber \\ 
& - \frac{1}{\epsilon} \iint_{(0,t) \times \mathbb{T}^n} \rho \; \nabla \bar{u}: (v-\bar{v}) \otimes (v-\bar{v}) dxd\tau \nonumber\\ 
& - \frac{1}{\epsilon} \iint_{(0,t) \times \mathbb{T}^n} \rho[(\mu''(\rho)\nabla \rho - \mu''(\bar{\rho})\nabla \bar{\rho})\cdot((v-\bar{v})\dive \bar{u} - (u-\bar{u})\dive \bar{v})]dxd\tau \nonumber\\ 
& - \frac{1}{\epsilon}\iint_{(0,t) \times \mathbb{T}^n} \rho (\mu'(\rho)- \mu'(\bar{\rho}))[(v-\bar{v})\cdot\nabla \dive \bar{u}-(u- \bar{u})\cdot\nabla \dive \bar{v}]dxd\tau.\nonumber\\
&  - \frac{2\nu}{\epsilon}\iint_{ (0,t) \times \mathbb{T}^n } \mu_L(\rho) D(\bar u):D(u-\bar u) dxd\tau - \frac{\nu}{\epsilon}\iint_{ (0,t) \times \mathbb{T}^n }\lambda_L(\rho)\dive \bar u(\dive u - \dive \bar u)dxd\tau
\end{align}
where 
\begin{equation*}\bar{m} = \bar{\rho}\bar{u}= \epsilon \left(- \nabla p(\bar{\rho})+   \dive S_1(\bar{\rho})\right); \ \bar{J}= \bar{\rho}\bar{v} = \nabla \mu(\bar{\rho}).
\end{equation*}
are defined as in $\eqref{eq:defbarutheo}$
\end{theorem}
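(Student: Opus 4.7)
The plan is to follow the blueprint of the proof of Theorem \ref{relativentropy} with essentially no modifications for all non-viscous terms, since the scaled hyperbolic/Korteweg structure of \eqref{NSK} coincides with \eqref{ekb-scaled}, and the equilibrium \eqref{ns-scaledstrong} is obtained by adding the same error term $\bar e = O(\epsilon)$ to the same first--order part. Concretely, I would start from the dissipation inequality \eqref{dissNS} of Definition \ref{deFNS} applied with the Lipschitz cut--off $\theta(\tau)$ defined in the proof of Theorem \ref{relativentropy} and let $\mu\to 0$; this produces the contribution $\int_{\mathbb{T}^n}\eta(\rho,m,J)\bigr|_0^t$ controlled by the friction dissipation \emph{plus} the two new viscous dissipations $-\frac{2\nu}{\epsilon}\iint \mu_L(\rho)|D(u)|^2$ and $-\frac{\nu}{\epsilon}\iint \lambda_L(\rho)|\dive u|^2$. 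For the equilibrium, I would integrate \eqref{etabarNS} directly, noting that it differs from its Euler--Korteweg counterpart only by the presence of $\bar e$ and carries no extra viscous contribution since the limiting equation \eqref{GFfor NS} is not viscous.

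Next I would reproduce verbatim the choice of test functions $\psi=\theta(h'(\bar\rho)-\tfrac12|\bar u|^2-\tfrac12|\bar v|^2)$ and $\Phi=\theta(\bar u,\bar v)$ in the weak formulation \eqref{psi}--\eqref{phi2} of the equations for the differences $(\rho-\bar\rho,m-\bar m, J-\bar J)$ (now with the viscous test--function terms present in the $\phi$--equation of Definition \ref{deFNS}). All contributions not involving $\mu_L,\lambda_L$ combine, exactly as in \eqref{corradded}--\eqref{eq:last}, to yield every line of \eqref{eq:relenNS} except the two new viscous lines, with the error, pressure and capillarity terms treated identically. The only book-keeping difference is that the test function $\phi=\theta\bar u$ applied to the viscous part of the momentum equation produces
\begin{equation*}
+\frac{2\nu}{\epsilon}\iint \mu_L(\rho)\,D(u):\nabla\bar u\,dxd\tau
+\frac{\nu}{\epsilon}\iint \lambda_L(\rho)\,\dive u\,\dive\bar u\,dxd\tau,
\end{equation*}
and by the symmetry of $D(u)$ the first integrand equals $\mu_L(\rho)D(u):D(\bar u)$.

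The key algebraic step is then to combine these with the viscous dissipation from \eqref{dissNS}. Using
\begin{equation*}
|D(u)|^2-D(u):D(\bar u)=D(u):D(u-\bar u)=|D(u-\bar u)|^2+D(\bar u):D(u-\bar u),
\end{equation*}
and the analogous scalar identity for $\dive$, the sum of the dissipation and the test--function contributions collapses exactly to
\begin{equation*}
-\frac{2\nu}{\epsilon}\iint \mu_L(\rho)|D(u-\bar u)|^2 -\frac{\nu}{\epsilon}\iint \lambda_L(\rho)|\dive(u-\bar u)|^2-\frac{2\nu}{\epsilon}\iint \mu_L(\rho)D(\bar u):D(u-\bar u)-\frac{\nu}{\epsilon}\iint \lambda_L(\rho)\dive\bar u\,\dive(u-\bar u),
\end{equation*}
matching the last two lines of \eqref{eq:relenNS}.

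The main obstacle I anticipate is not analytical but one of admissibility: I must ensure that $\phi=\theta\bar u$ is a legitimate test function in the $\phi$--equation of Definition \ref{deFNS} and that the viscous pairings just written are meaningful for the given class of weak solutions. This is where the integrability assumptions $\mu_L(\rho)D(u),\lambda_L(\rho)\dive u\in L^1$ stated in Definition \ref{deFNS}, together with the smoothness of $\bar u$ and the $L^\infty$ bound on $\bar\rho$, are crucial: they make the integration by parts rigorous without requiring any structural relation between the Lamé coefficients $\mu_L,\lambda_L$ and the capillarity $k(\rho)$, which is precisely the improvement claimed in the introduction. Once this justification is in place, assembling the four ingredients (dissipation of $\eta(\rho,m,J)$, identity for $\bar\eta$, linear parts from $\psi,\phi,\varphi$, and the viscous algebraic identity) yields \eqref{eq:relenNS}.
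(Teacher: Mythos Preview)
Your proposal is correct and follows essentially the same approach as the paper: both reduce the argument to the Euler--Korteweg case of Theorem \ref{relativentropy} for all non-viscous terms, isolate the new viscous contributions as $\tfrac{2\nu}{\epsilon}\iint \mu_L(\rho)D(u):D(\bar u)+\tfrac{\nu}{\epsilon}\iint \lambda_L(\rho)\dive u\,\dive\bar u$ from testing the $m$-equation with $\theta\bar u$ (using the symmetry $D(u):\nabla\bar u=D(u):D(\bar u)$), and combine these with the dissipation from \eqref{dissNS} via the identity $|D(u)|^2-D(u):D(\bar u)=|D(u-\bar u)|^2+D(\bar u):D(u-\bar u)$. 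Your explicit discussion of the admissibility of $\phi=\theta\bar u$ and of the role of the $L^1$ bounds on $\mu_L(\rho)D(u)$, $\lambda_L(\rho)\dive u$ is a welcome addition that the paper leaves implicit.
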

\begin{proof}
To prove the relation $\eqref{eq:relenNS}$ we underline here only the differences coming from the presence of viscosity term in the momentum equation $m$. To this end, we recall that from energy inequality $\eqref{dissNS}$, using the test function  $\theta(\tau)$:
\begin{equation*}
\theta(\tau)=	
\begin{cases}
1,    & \hbox{ for } 0 \leq \tau < t, \\
\frac{t-\tau}{\mu} + 1, & \hbox{ for } t \leq \tau < t+\tau, \\
0, & \hbox{ for } \tau \geq t +\mu,
\end{cases}
\end{equation*}
as $\mu \rightarrow 0$,  one has:
\begin{align*}
\int_{\mathbb{T}^n} (\eta(\rho,m,J))|_{\tau =0}^t & \leq - \frac{1}{\epsilon^2} \iint_{(0,t)\times\mathbb{T}^n} \frac{|m|^2}{\rho} dxd\tau  - \frac{2\nu}{\epsilon}\iint_{(0,t) \times \mathbb{T}^n} \mu_L(\rho)|D(u)|^2dxd\tau \\
& - \frac{\nu}{\epsilon}\iint_{(0,t) \times \mathbb{T}^n} \lambda_L(\rho)|\dive u|^2 dxd\tau.
\end{align*}
On the other hand,  integrating over $(0,t)$ the relation $\eqref{etabarNS}$ we get:
\begin{equation*}
\int_{\mathbb{T}^n} \bar \eta(\bar \rho, \bar m , \bar J)|_{\tau = 0}^t dx = - \frac{1}{\epsilon^2} \iint_{ (0,t) \times \mathbb{T}^n } \frac{|\bar m|^2}{\rho}dx + \iint_{(0,t) \times \mathbb{T}^n} \bar e \frac{\bar m }{\bar \rho}dx.
\end{equation*}
To control the linear correction of the entropy we choose, as in Theorem \ref{relativentropy}, the following test functions in the weak formulation for the differences $(\rho - \bar\rho, m - \bar m, J-\bar J)$:
\begin{align*}
&\psi = \theta(\tau) \left( h'(\bar{\rho})- \frac{1}{2} \frac{|\bar{m}|^2}{\bar{\rho^2}} - \frac{|\bar{J}|^2}{ \bar{\rho^2}} \right) \text{ \; and} \\
& \Phi = (\phi, \varphi)= \theta(\tau) \left( \frac{\bar{m}}{\bar{\rho}}, \frac{\bar{J}}{\bar{\rho}} \right), 
\end{align*}
 where $\theta(\tau)$ is defined above. Since $D(u):\nabla \phi = D(u):D(\phi)$ and the equation for  $\bar m$ does not involve viscosity terms, 
the new terms due to the viscosity in  the weak formulation of the equation for $m-\bar m$ re given solely by:
\begin{align*}
  +\frac{\nu}{\epsilon} \iint_{[0,t] \times \mathbb{T}^n} \big (2  \mu_L(\rho)D(u):D(\bar u) dxd\tau +  \lambda_L(\rho) \dive u \dive \bar u \big ) dxd\tau.
\end{align*}
Hence, the new terms we need to handle here with respect to  Theorem \ref{relativentropy} are the following integrals:

\begin{align*}
&  \frac{2 \nu}{\epsilon} \iint_{(0,t) \times \mathbb{T}^n} \mu_L(\rho)[ D(u):D(\bar u)]dxd\tau + \frac{\nu}{\epsilon} \iint_{(0,t) \times \mathbb{T}^n} \lambda_L(\rho) \dive u \dive \bar u dx\tau\\
&  -\frac{2\nu}{\epsilon} \iint_{ (0,t) \times \mathbb{T}^n } \mu_L(\rho)|D(u)|^2 \; dxd\tau - \frac{\nu}{\epsilon} \iint_{ (0,t) \times \mathbb{T}^n } \lambda_L(\rho) |\dive u|^2 dxd\tau. 
\end{align*}
which can be rearranged as follows: 
\begin{align*}
& - \frac{2\nu}{\epsilon}\iint_{ (0,t) \times \mathbb{T}^n } \mu_L(\rho)|D(u-\bar u)|^2 dxd\tau - \frac{2\nu}{\epsilon}\iint_{ (0,t) \times \mathbb{T}^n }\mu_L(\rho) D(\bar u):D(u-\bar u) dxd\tau \\
& - \frac{\nu}{\epsilon}\iint_{ (0,t) \times \mathbb{T}^n }\lambda_L(\rho)|\dive(u-\bar u )|^2dxd\tau - \frac{\nu}{\epsilon}\iint_{ (0,t) \times \mathbb{T}^n }\lambda_L(\rho)\dive \bar u(\dive u - \dive \bar u)dxd\tau,
\end{align*} 
Hence, repeating the same calculation of Theorem \ref{relativentropy} for all remaining terms we readily obtain \eqref{eq:relenNS} and the proof is complete.
\end{proof}

Now we use Theorem \ref{relativeentropyNS} to measure the distance between the two solutions in terms of the relative entropy as in Section \ref{sec:stabconv}. To this end, we recall the definition \eqref{eq:distfi} of the ``distance'' 
$\Psi(t)$:
\begin{equation*} 
    \Psi(t) = \int_{\mathbb{T}^n} \left (h(\rho|\bar{\rho}) + \frac{1}{2} \rho \left| \frac{m}{\rho} - \frac{\bar{m}}{\bar{\rho}}\right|^2 + \frac{1}{2} \rho \left|\frac{J}{\rho} - \frac{\bar{J}}{\bar{\rho}}\right|^2 \right )dx.
\end{equation*}
Then the following theorem holds. 
\begin{theorem}\label{theo:stabNSK}
	Let $T>0$ be fixed, let $(\rho,m, J)$ be as in Definition $\ref{deFNS}$ and $\bar{\rho}$ be a smooth solution of $\eqref{GFfor NS}$ such that $\bar{\rho} \geq \delta > 0$, $\bar m$ and $\bar J$ defined as $\eqref{eq:defbarutheo}$. Assume the pressure $p(\rho)$ is given by the $\gamma$-law $\rho^{\gamma}$ with $\gamma > 1$. Assume $\mu(\rho) = \rho^{\frac{s+3}{2}}$ with $\gamma \geq s+2$ and $s \geq -1$, and 
	\begin{equation}
	    \label{lamecontrol}
	    \left\| \mu_L(\rho) \right\|_{L^{\infty}((0,t);L^1(\mathbb{T}^n))} , \left\| \lambda_L(\rho) \right\|_{L^{\infty}((0,t);L^1(\mathbb{T}^n))} \leq  \tilde E.
	\end{equation}
	for a positive constant $\tilde E$ independent from $\epsilon$.
	Then, for $t \in [0,T]$, the stability estimate 
	\begin{equation}\label{stabtheosec5}
	\Psi(t) \leq C(\Psi(0) + \epsilon^4 +\nu\epsilon),
	\end{equation}
	 holds true, where $C$ is a positive constant depending on
	$T$, $M$, the $L^1$ bound for $\rho$,  and $E_o$, the energy bound, both assumed to be uniform in $\epsilon$, $\bar{\rho}$ and its derivatives. Moreover, if $\Psi(0) \rightarrow 0$ as $\epsilon \rightarrow 0$, then as $\epsilon \rightarrow 0$
	\begin{equation}
	\sup_{t \in [0,T]} \Psi(t) \rightarrow 0.
	\end{equation}
\end{theorem}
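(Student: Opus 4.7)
The strategy is to start from the relative entropy inequality \eqref{eq:relenNS} of Theorem \ref{relativeentropyNS} and bound every term on its right-hand side either by a multiple of $\Psi(\tau)$ or by a remainder of size $\epsilon^4$ or $\nu\epsilon$, so as to close a Gr\"onwall argument as done in Theorem \ref{STABILITY}. All the terms that already appear in the relative entropy inequality \eqref{eq:relen} of the Euler--Korteweg case are controlled verbatim as in the proof of Theorem \ref{STABILITY}. In particular, the error term $\bar e\cdot(\rho/\bar\rho)(u-\bar u)$ is handled via Young's inequality together with $\bar e = O(\epsilon)$ from \eqref{eq:error}, producing a contribution of order $\epsilon^4$ at the cost of a quarter of the friction dissipation $\frac{1}{\epsilon^2}\rho|u-\bar u|^2$; the convective and pressure contributions are absorbed into $\Psi(\tau)$ using $\nabla \bar u = O(\epsilon)$ and the identity \eqref{pcontrol}; and the Korteweg/effective-velocity terms grouped in $Q_2$ are bounded by Lemma \ref{lemma8}, the relation $\mu''(\rho)\nabla\rho - \mu''(\bar\rho)\nabla\bar\rho = \frac{s+1}{2}(v-\bar v)$, and the smoothness of $\bar\rho$, which makes $\dive \bar u$, $\dive \bar v$, $\nabla \dive \bar u$, $\nabla \dive \bar v$ all of order $\epsilon$.

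The only genuinely new contribution to estimate is given by the last two lines of \eqref{eq:relenNS}, namely the viscous dissipation and the viscous cross term. I would handle them jointly, observing that the Lam\'e condition \eqref{lame2} is precisely the statement that, for almost every $x$, the symmetric bilinear form on $n\times n$ symmetric tensors
\begin{equation*}
B(X,Y) := 2\mu_L(\rho)\, X:Y + \lambda_L(\rho)\,\mathrm{tr}(X)\,\mathrm{tr}(Y)
\end{equation*}
is positive semidefinite, since for any $X$ with traceless part $X_0$ one has $B(X,X) = 2\mu_L |X_0|^2 + (\tfrac{2}{n}\mu_L + \lambda_L)(\mathrm{tr}\, X)^2 \geq 0$. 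Cauchy--Schwarz with respect to $B$ combined with Young's inequality then yields the pointwise bound
\begin{equation*}
|B(D(\bar u), D(u-\bar u))| \leq \tfrac{1}{2} B(D(\bar u),D(\bar u)) + \tfrac{1}{2} B(D(u-\bar u),D(u-\bar u)).
\end{equation*}
Integrating and multiplying by $\nu/\epsilon$, this allows the viscous cross term $-\frac{\nu}{\epsilon}\iint B(D(\bar u), D(u-\bar u))$ to be absorbed for one half by the dissipative term $-\frac{\nu}{\epsilon}\iint B(D(u-\bar u), D(u-\bar u))$; the remaining piece $\frac{\nu}{2\epsilon}\iint B(D(\bar u),D(\bar u))$ is then controlled via $|D(\bar u)|, |\dive \bar u| \leq C\epsilon$ (because $\bar u=O(\epsilon)$ and $\bar\rho \geq \delta > 0$ is smooth) and the uniform $L^1$ bound \eqref{lamecontrol} on $\mu_L(\rho)$ and $\lambda_L(\rho)$, giving a contribution of order $\nu\epsilon$ after integration in time.

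Collecting all the bounds produces an integral inequality of the form
\begin{equation*}
\Psi(t) + \tfrac{1}{2\epsilon^2}\iint_{(0,t)\times\mathbb{T}^n}\rho\left|\tfrac{m}{\rho}-\tfrac{\bar m}{\bar\rho}\right|^2 dxd\tau \leq \Psi(0) + \tilde C(\epsilon^4 + \nu\epsilon) + C\int_0^t \Psi(\tau)\,d\tau,
\end{equation*}
from which Gr\"onwall's lemma yields \eqref{stabtheosec5} and, in turn, the convergence statement. The main obstacle, and the one this formulation addresses, is the treatment of the viscous cross term without any pointwise sign information on $\lambda_L(\rho)$ and with only an $L^1$ control of the Lam\'e coefficients: using the Lam\'e condition \eqref{lame2} as a positive semidefiniteness property of the form $B$ (rather than handling the $\mu_L$ and $\lambda_L$ contributions separately) is what allows us to close the argument with an additional remainder of the optimal order $O(\nu\epsilon)$.
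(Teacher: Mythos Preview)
Your overall strategy coincides with the paper's: start from \eqref{eq:relenNS}, reuse verbatim the bounds of Theorem \ref{STABILITY} for the Euler--Korteweg terms $E$, $Q_1$, $Q_2$, treat the viscous cross term by Young's inequality so that half of it is absorbed by the viscous dissipation and the other half is $O(\nu\epsilon)$ thanks to $D(\bar u),\dive\bar u = O(\epsilon)$ and \eqref{lamecontrol}, and close with Gr\"onwall.

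The one genuine methodological difference is in how you organize the viscous estimate. The paper splits the cross term into the $\mu_L$ and $\lambda_L$ contributions and applies Young's inequality to each separately, and only afterwards invokes the Lam\'e condition \eqref{lame2} to conclude that the residual combination $\tfrac{\nu}{\epsilon}\!\iint\mu_L|D(u-\bar u)|^2 + \tfrac{\nu}{2\epsilon}\!\iint\lambda_L|\dive(u-\bar u)|^2$ is nonnegative. You instead package both coefficients into the single bilinear form $B$ and use \eqref{lame2} upfront as positive semidefiniteness of $B$, so that Cauchy--Schwarz for $B$ handles the cross term in one stroke. Your route is slightly more elegant and, more to the point, is insensitive to the sign of $\lambda_L$: the paper's separate Young step for the $\lambda_L$ piece is only literally valid when $\lambda_L\geq 0$, whereas your formulation works under the bare Lam\'e condition. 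Both arguments yield the same $O(\nu\epsilon)$ remainder.

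One small slip to fix: in your summary of the $Q_2$ estimate you assert that $\dive\bar v$ and $\nabla\dive\bar v$ are $O(\epsilon)$. They are not, since $\bar v=\nabla\mu(\bar\rho)/\bar\rho$ is $O(1)$; only the $\bar u$--quantities inherit a factor $\epsilon$ from $\bar m = O(\epsilon)$. This does not break your argument, because the $\dive\bar v$ and $\nabla\dive\bar v$ contributions in $Q_2$ are always paired with $(u-\bar u)$ and are therefore absorbed (as in the proof of Theorem \ref{STABILITY}) by borrowing a fraction of the friction dissipation $\tfrac{1}{\epsilon^2}\rho|u-\bar u|^2$, not by an $O(\epsilon)$ smallness of $\bar v$. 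Just correct the justification accordingly.
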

\begin{proof}
From the definition of $\Psi(t)$  and from the relative entropy estimate given by Theorem $\ref{relativeentropyNS}$ we obtain for $t \in [0,T]$:
\begin{align*}
& \Psi(t) + \frac{1}{\epsilon^2} \iint_{[0,t]\times \mathbb{T}^n} \rho \left| \frac{m}{\rho} - \frac{\bar{m}}{\bar{\rho}} \right|^2dxd\tau + \frac{2\nu}{\epsilon} \iint_{ (0,t) \times \mathbb{T}^n } \mu_L(\rho)|D(u)-D(\bar u)|^2 dxd\tau    \\
& +\frac{\nu}{\epsilon}\iint_{ (0,t) \times \mathbb{T}^n } \lambda_L(\rho)[\dive u - \dive \bar u]^2 dxd\tau  \leq \Psi(0) + \iint_{ (0,t) \times \mathbb{T}^n } \big(|E| +|Q| + |E_2| \big)dxd\tau .
\end{align*}
The terms $Q$ and $E$ are exactly the same of Section \ref{sec:stabconv}, that is
\begin{equation*}
E  = \bar{e} \cdot \frac{\rho}{\bar{\rho}} \left( \frac{m}{\rho} - \frac{\bar{m}}{\bar{\rho}} \right)
\end{equation*}
and
\begin{align*}
Q  = & - \frac{1}{\epsilon} \int \int_{[0,t]\times \mathbb{T}^n} \rho \nabla \bar{u} : [(u-\bar{u}) \otimes (u-\bar{u})]dxd\tau \\
& - \frac{1}{\epsilon} \int \int_{[0,t] \times \mathbb{T}^n} \rho \nabla \bar{u} : [(v-\bar{v}) \otimes (v-\bar{v})]dxd\tau - \frac{1}{\epsilon} \int \int_{[0,t]\times \mathbb{T}^n} p(\rho|\bar{\rho}) \dive \bar{u} dxd\tau  \\
& -  \frac{1}{\epsilon} \int_0^t\int_{\mathbb{T}^n} \rho[(\mu''(\rho)\nabla \rho - \mu''(\bar{\rho})\nabla \bar{\rho})((v-\bar{v})\dive \bar{u} - (u-\bar{u})\dive \bar{v})]dxdt \\ 
&  - \frac{1}{\epsilon} \int_0^t\int_{\mathbb{T}^n} \rho (\mu'(\rho)- \mu'(\bar{\rho}))[(v-\bar{v}) \nabla \dive \bar{u} -  (u-\bar{u})\nabla \dive \bar{v}]dxd\tau, \\
\end{align*}
while the new error term $E_2$ is defined as follows: 
\begin{align*}
E_2 &: =  - \frac{2\nu}{\epsilon}\iint_{ (0,t) \times \mathbb{T}^n }\mu_L(\rho) D(\bar u):D(u-\bar u) dxd\tau
  - \frac{\nu}{\epsilon}\iint_{ (0,t) \times \mathbb{T}^n }\lambda_L(\rho)\dive \bar u(\dive u - \dive \bar u)dxd\tau
 \\
 & = : E_{21} + E_{22}.
\end{align*}
Clearly, the terms $Q$ and $E$ can be bounded  as in Theorem \ref{STABILITY}, namely 
\begin{equation*}
\iint_{[0,t]\times \mathbb{T}^n}|E|dx\tau \leq CT\epsilon^4 + \frac{1}{4\epsilon^2}\iint_{[0,t]\times \mathbb{T}^n} \rho\left|\frac{m}{\rho} - \frac{\bar m }{\bar \rho}\right|^2dxd\tau,
\end{equation*}
where $C$  depends on the bound for $\bar \rho$ and on the (uniform)   $L^1$ bound for $\rho$. Here we also used the fact that $\bar e = O(\epsilon)$.
Moreover, we recall the estimate for $Q$ as well:
 \begin{equation*}
 \iint_{[0,t]\times \mathbb{T}^n}|Q|dxd\tau \leq \frac{1}{4\epsilon^2}\iint_{[0,t]\times \mathbb{T}^n} \rho\left|\frac{m}{\rho} - \frac{\bar m }{\bar \rho}\right|^2dxd\tau + 
  \tilde C \int_{[0,t]}\Psi(\tau)d\tau,
 \end{equation*}
where $\tilde  C$  depends 
  on   the bounds $\dive \bar u/ \epsilon=O(1)$ and $\nabla \dive \bar u/ \epsilon=O(1)$. 
  
  To bound the new terms $E_{21}$ and $E_{22}$ we shall use  the uniform bound \eqref{lame2} a sfollows.
\begin{align*}
& E_{21} = - \frac{2 \nu}{\epsilon} \iint_{(0,t) \times \mathbb{T}^n} \mu_L(\rho)D(\bar u): [D(u-\bar u)]dxd\tau \leq \\
& \; \frac{4\nu}{\epsilon} \iint_{(0,t) \times \mathbb{T}^n} \mu_L(\rho)|D(\bar u)|^2dxd\tau + \frac{\nu}{\epsilon}\iint_{(0,t) \times \mathbb{T}^n}\mu_L(\rho)|D(u- \bar u)|^2 dxd\tau \leq \\
& \; \frac{\nu}{\epsilon} \iint_{(0,t) \times \mathbb{T}^n} \mu_L(\rho)|D(u-\bar u )|^2 dxd\tau + \nu C_2T \epsilon,
\end{align*}
where we have used ${D(\bar u)} = O(\epsilon)$, and $C_2$ depends also on $E_o$ in view of \eqref{lamecontrol}. 
The estimate for  $E_{22}$ is analogous:  we use the fact that ${\dive \bar u }= O(\epsilon)$ as follows
\begin{align*}
E_{22} = 
&- \frac{\nu}{\epsilon} \iint_{(0,t) \times \mathbb{T}^n} \lambda_L(\rho) \dive \bar u( \dive u -\dive \bar u)  dx\tau  \leq \\
&  \frac{\nu}{2\epsilon} \iint_{(0,t) \times \mathbb{T}^n} \lambda_L(\rho) |\dive (u - \bar u)|^2 dxd\tau  + \frac{2\nu}{\epsilon} \iint_{(0,t) \times \mathbb{T}^n} \lambda_L(\rho) |\dive \bar u|^2 dxd\tau \leq \\
& +\nu C_4T \epsilon + \frac{\nu}{2\epsilon} \iint_{(0,t) \times \mathbb{T}^n} \lambda_L(\rho) |\dive (u - \bar u)|^2 dxd\tau,
\end{align*}
where $C_4$ depends also on $E_o$, again using \eqref{lamecontrol}.

Finally we get:
\begin{align*}
& \Psi(t) + \frac{1}{2\epsilon^2} \iint_{[0,t]\times \mathbb{T}^n} \rho \left| \frac{m}{\rho} - \frac{\bar{m}}{\bar{\rho}} \right|^2dxd\tau + \frac{\nu}{\epsilon} \iint_{ (0,t) \times \mathbb{T}^n } \mu_L(\rho)|D(u)-D(\bar u)|^2 dxd\tau    \nonumber\\
&+ \frac{\nu}{2\epsilon}\iint_{ (0,t) \times \mathbb{T}^n } \lambda_L(\rho)[\dive u - \dive \bar u]^2 dxd\tau  \leq \Psi(0) + \tilde{C}\epsilon^4 + \nu \bar{C} \epsilon +  \int_0^t \Psi(\tau) d\tau,
\end{align*}
and, since from the relation $\eqref{lame2}$  we obtain
\begin{align*}
0 &\leq \frac{\nu}{\epsilon} \iint_{ (0,t) \times \mathbb{T}^n } \frac{1}{2}\left( \lambda_L(\rho) + \frac{2}{n}\mu_L(\rho) \right) |\dive(u-\bar{u})|^2dxd\tau  \\
& \leq \frac{\nu}{\epsilon} \iint_{ (0,t) \times \mathbb{T}^n } \mu_L(\rho)|D(u)-D(\bar u)|^2 dxd\tau +  \frac{\nu}{2\epsilon}\iint_{ (0,t) \times \mathbb{T}^n } \lambda_L(\rho)[\dive u - \dive \bar u]^2 dxd\tau,
\end{align*}
 the Gronwall's Lemma gives the result.
\end{proof}
\begin{remark}
Let us emphasize that the choice $\mu_L(\rho) = \mu(\rho)$ and $\lambda_L(\rho) = \lambda(\rho)$ is compatible with \eqref{lame2} and \eqref{lamecontrol} in the range of exponents considered here. 
Indeed, we have  $\mu(\rho)=\rho^{\frac{s+3}{2}}$ with $s\geq -1$, $s+2 \leq \gamma$, and $\gamma>1$, and $\lambda(\rho)=\frac{s+1}{2} \mu(\rho)$. Then  $\mu(\rho)$ and $\lambda(\rho)$ are both nonnegative and
\begin{equation*}
\left\| \mu_L(\rho) \right\|_{L^{\infty}((0,t);L^1(\mathbb{T}^n))} , \left\| \lambda_L(\rho) \right\|_{L^{\infty}((0,t);L^1(\mathbb{T}^n))} \leq \bar C||\rho||_{L^{\infty}((0,t)L^{\gamma}(\mathbb{T}^n))} \leq  \bar C E_o^{\frac{1}{\gamma}}.
\end{equation*}

Moreover, it is worth to observe the difference between the stability estimate \eqref{stabtheosec4} obtained for the Euler--Korteweg model and \eqref{stabtheosec5} of Theorem \ref{theo:stabNSK}. Besides the common control of the initial relative entropy $\Psi(0)$, the latter gives a control of the errors of the form $O(\epsilon^4) + O(\nu \epsilon)$, which is consistent with the one in  \eqref{stabtheosec4} as $\nu\to 0+$. In other words, the stability estimate obtained in the Euler-Korteweg case is better, nevertheless it is recovered by the one obtained in the presence of the viscosity terms.
The leeway which allows us to perform this estimate in the case of the high friction limit for the Navier--Stokes--Korteweg system is  linked to the fact the viscosity terms appear at an intermediate order in the Hilbert expansion and they are ``less singular'' with respect to the ones coming from the friction term, and therefore they can be controlled in the relative entropy estimate.
\end{remark}

\section*{Acknowledgements}
The authors would like to thank A.E. Tzavaras for many helpful discussions they have about the topics of this paper.


\begin{thebibliography}{99}

\bibitem{AM1}   Antonelli, P. and  Marcati, P.
On the finite energy weak solutions to a system in Quantum Fluid Dynamics, \textit{Comm. Math. Phys.} {\bf 287} (2009), 657-686 

\bibitem{AM2} Antonelli, P. and  Marcati, P.  The Quantum Hydrodynamics system in two space dimensions, \textit{Arch. Ration. Mech. Anal.} {\bf 203} (2012), 499-527  

\bibitem{AS} Antonelli, P. and Spirito, S. Global existence of finite energy weak solutions of Quantum Navier--Stokes equations, \textit{Arch. Ration. Mech. Anal.} {\bf 225} (2017), 1161-1199 

\bibitem{AS2} Antonelli, P. and Spirito, S. 
Global existence of weak solutions to the Navier--Stokes--Korteweg equations, arXiv:1903.02441 (2019)

\bibitem{Bia19}
Bianchini, R. Strong convergence of a vector--BGK model to the incompressible Navier--Stokes equations via the relative entropy method. 
{\it J. Math. Pures Appl. (9)}  {\bf132} (2019), 280–307. 


\bibitem{Bresh}  Bresch, D., Gisclon, M., and Lacroix--Violet, I. On Navier-Stokes-Korteweg and Euler-Korteweg systems: application to quantum fluids models. \textit{ Arch. Ration. Mech. Anal.} {\bf 233} (2019), no. 3, 975-1025.

\bibitem{Carrillo} 
Carrillo, J.A., Peng, Y., and Wr\'{o}blewska-Kami\'{n}ska, A. 
Relative Entropy Method for the relaxation limit of Hydrodynamic Model, arXiv:1910.12237  (2019)

\bibitem{DM04} 
Donatelli, D. and Marcati, P. Convergence of singular limits for multi-D semilinear hyperbolic systems to parabolic systems.
{\it Trans. Amer. Math. Soc.} {\bf 356} (2004), 2093-2121.

\bibitem{FT19}
 Feireisl, E. and Tang, T. 
 On a singular limit for the stratified compressible Euler system. {\it Asymptot. Anal.} {\bf 114} (2019), no.\ 1-2, 59–72. 

\bibitem{GLT} Giesselmann, J.,  Lattanzio, C., and; Tzavaras, A.E. 
Relative energy for the Korteweg theory and related Hamiltonian flows in gas dynamics.
 \textit{ Arch. Ration. Mech. Anal.} {\bf 223} (2017), no. 3, 1427-1484.
 
 

 
 \bibitem{HMP05}
 Huang, F., Marcati, P., and Pan, R. 
 Convergence to the Barenblatt solution for the compressible Euler equations with
damping and vacuum. 
\textit{Arch. Ration. Mech. Anal.} {\bf 176} (2005), 1-24.



 \bibitem{HPW11}
 Huang, F., Pan, R., and Wang, Z. 
 $L^1$ convergence to the Barenblatt solution for compressible Euler equations with damping.
\textit{Arch. Ration. Mech. Anal.} {\bf 200} (2011), 665-689.

\bibitem{HJT19} 
Huo, X., J\"ungel, A., and Tzavaras, A.E. 
High-friction limits of Euler flows for multicomponent systems. {\it Nonlinearity} {\bf32} (2019), no.\ 8, 2875–2913.
 
\bibitem{LT} Lattanzio, C. and  Tzavaras,  A.E. 
Relative entropy in diffusive relaxation. \textit{ SIAM J. Math. Anal.} {\bf 45} (2013), no. 3, 1563-1584. 

\bibitem{LT2} Lattanzio, C. and  Tzavaras,  A.E.
From gas dynamics with large friction to gradient flows describing diffusion theories.
 \textit{Comm. Partial Differential Equations} {\bf 42} (2017), no. 2, 261-290.

\bibitem{MM90}
Marcati, P. and Milani, A.J.
The one--dimensional Darcy?s law as the limit of a compressible Euler flow. 
{\it J. Differential Equations} {\bf 84} (1990), no. 1, 129-147.

\bibitem{OR20}
Ostrowski, L. and Rohde, C.
Compressible multicomponent flow in porous media with Maxwell‐-Stefan diffusion.
{\it Math. Methods Appl. Sci.} {\bf 43}, no.\ 7, 4200-4221






\bibliographystyle{plain}
\end{thebibliography}
\end{document}